\documentclass[12pt]{amsart}
\usepackage{fullpage}
\usepackage{amscd,amsmath,amsthm,amssymb}
\usepackage{comment}
\usepackage[all]{xy}
\usepackage{color}

\usepackage[lined,algonl,boxed,norelsize]{algorithm2e}
\let\chapter\undefined

\usepackage[
backref,
pdfauthor={  }, 
]{hyperref}
\usepackage[
alphabetic,
backrefs,
msc-links,
nobysame,
lite,
]{amsrefs} 

\usepackage{tikz, float} \usetikzlibrary {positioning}
%
%
%

\def\A{{\mathcal A}}

\def\opn#1#2{\def#1{\operatorname{#2}}} 

\opn\set{set}
\opn\supp{supp}
\opn\ch{ch}
\opn\depth{depth} 
\opn\codim{codim}
\opn\ini{in} 
\opn\LM{LM}
\opn\LC{LC}
\opn\NF{NF}
\opn\Merge{Merge}
\opn\sgn{sgn}
\opn\div{div} 
\opn\Div{Div} 
\opn\Pic{Pic}
\opn\Prin{Prin}
\opn\Del{Del}
\opn\op{op}
\opn\indeg{indeg} 
\opn\outdeg{outdeg}
\opn\red{red}
\opn\Spec{Spec} 
\opn\Supp{Supp} 
\opn\supp{supp} 
\opn\Ker{Ker} 
\opn\Coker{Coker} 
\opn\Hom{Hom}
\opn\Tor{Tor} 
\opn\id{id}
\opn\span{span}
\opn\Image{Image}
\opn\con{conv} 
\opn\relint{rel.int} 
\opn\vol{vol}
\opn\syz{{\rm syz}}
\opn\spoly{{\rm spoly}}
\opn\LM{{\rm LM}}
\opn\lm{{\rm lm}}
\opn\lcm{{\rm lcm}} 
\opn\A{\mathcal A}
\opn\dist{dist}
\opn\pd{pd}
\opn\en{en}
\def\Implies{\ifmmode\Longrightarrow \else
        \unskip${}\Longrightarrow{}$\ignorespaces\fi}
\def\implies{\ifmmode\Rightarrow \else
        \unskip${}\Rightarrow{}$\ignorespaces\fi}
\def\iff{\ifmmode\Longleftrightarrow \else
        \unskip${}\Longleftrightarrow{}$\ignorespaces\fi}
\newtheorem{Theorem}{Theorem}[section]
\newtheorem{Lemma}[Theorem]{Lemma}
\newtheorem{Corollary}[Theorem]{Corollary}

\theoremstyle{remark}
\newtheorem{Remark}[Theorem]{Remark}

\theoremstyle{definition}
\newtheorem{Example}[Theorem]{Example}

\newtheorem{Definition}[Theorem]{Definition}
\newtheorem*{Notation}{Notation}
\def\qed{\ifhmode\textqed\fi
      \ifmmode\ifinner\quad\qedsymbol\else\dispqed\fi\fi}
\def\textqed{\unskip\nobreak\penalty50
       \hskip2em\hbox{}\nobreak\hfil\qedsymbol
       \parfillskip=0pt \finalhyphendemerits=0}
\def\dispqed{\rlap{\qquad\qedsymbol}}




\begin{document}


\title {Cellular resolutions from mapping cones}

\author {Anton Dochtermann}
\address{Department of Mathematics, University of Miami, USA
}
\email{anton@math.miami.edu}

\author {Fatemeh Mohammadi}
\address{Institut f\"ur Mathematik\\
Universit\"at Osnabr\"uck \\
 Germany}
\email{fatemeh@msri.org}


\subjclass[2010]{}
\keywords{cellular resolution, mapping cone, linear quotients.}

\date{\today}

\begin{abstract}
One can iteratively obtain a free resolution of any monomial ideal $I$ by considering the mapping cone of the map of complexes associated
to adding one generator at a time.  Herzog and Takayama have shown that this
procedure yields a minimal resolution if $I$ has linear quotients, in which case
the mapping cone in each step cones a Koszul complex onto the previously constructed resolution.  Here we consider cellular realizations of these resolutions.  Extending a construction of Mermin we describe a regular CW-complex that supports the
resolutions of Herzog and Takayama in the case that $I$ has a `regular decomposition
function'.  By varying the choice of chain map we recover other known cellular
resolutions, including the `box of complexes' resolutions of Corso, Nagel, and
Reiner and the related `homomorphism complex' resolutions of Dochtermann and
Engstr\"om.  Other choices yield combinatorially distinct complexes with interesting
structure, and suggests a notion of a `space of cellular resolutions'.
\end{abstract}

\maketitle




\section{Introduction}
Let $R = k[x_1, x_2, \dots, x_n]$ be the graded polynomial ring over a field $k$, and let $I \subset R$ be an ideal generated by monomials $I = 
\langle m_1, m_2, \dots, m_k \rangle$.  A popular game in combinatorial commutative algebra is to understand minimal graded resolutions of the monomial ideal $I$ under various restrictions.  In \cite{HT} Herzog and Takayama describe a general `mapping cone' procedure for constructing a free resolution of a monomial ideal $I$.  The basic idea is to utilize the short exact sequences that arise from adding one generator of $I$ at a time, and to iteratively build the resolution as a mapping cone of an appropriate map between previously constructed complexes.

\medskip

If the ideal $I$ has \emph{linear quotients} with respect to some ordering of its generators, the complexes and maps in question are particularly well-behaved, and this case the mapping cone construction leads to a \emph{minimal} resolution of $I$.  The class of ideals with linear quotients includes stable ideals, squarefree stable ideals, as well as matroidal ideals (relevant definitions below).  Stable monomial ideals themselves have a well-known minimal resolution first described in \cite{EK}, and the mapping cone construction generalizes this so-called \emph{Eliahou-Kervaire (EK) resolution}.  In this context there is a natural choice of homogeneous basis for each free module in the resolution of $I$.  In the case that the ideal $I$ has linear quotients, and furthermore has a \emph{regular decomposition function}, the differentials in the free resolution can also be described explicitly.  We review these concepts in Section \ref{Mapcones}.

\medskip

One way to describe the resolution of a (monomial) ideal is through the construction of a CW-complex whose vertices index the generators of $I$, and whose higher dimensional faces index the syzygies.  These so-called \emph{cellular resolutions} were first introduced by Bayer and Sturmfels in 
\cite{BayerSturmfels}.  A natural question to ask is if the mapping cone construction can be realized cellularly.  In \cite{Mermin} Mermin shows that the EK resolution is indeed cellular, and in the case that $I$ is generated in a fixed degree $d$, is in fact supported on a subcomplex of a suitably subdivided dilated simplex.

\medskip

In this paper we combine methods from \cite{HT} and \cite{Mermin} to show that one can obtain a larger class of cellular resolutions via the mapping cone construction.  Although the mapping cones of \cite{HT} are purely algebraic objects, they have a geometric interpretation which, in the context of ideals with linear quotients, we seek to combine with the decomposition function approach from \cite{HT}.  The basic idea is that if $I$ is an ideal with linear quotients then the mapping cone construction can be realized as an iterative procedure where in each step a geometric simplex is glued in a certain way onto the existing cellular resolution.

\medskip

In Section \ref{EKres} we extend Mermin's result to the case of ideals with linear quotients.  Our main result from that section is the following\footnote{While preparing this paper we learned of the recent preprint \cite{Afshin} of Goodarzi, where similar results were independently obtained.}.


\medskip

\begin{Theorem} (Theorem \ref{Theorem_cone})
Suppose $I$ has linear quotients with respect to some ordering $(m_1, \dots, m_k)$ of the generators, and furthermore suppose that $I$ has a regular decomposition function.  Then the minimal resolution of $I$ obtained as an iterated mapping cone is cellular and supported on a regular $CW$-complex.
\end{Theorem}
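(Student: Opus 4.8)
The plan is to build the CW-complex explicitly and inductively, mirroring the iterated mapping cone construction, and then to verify that the cellular chain complex of the result matches the Herzog–Takayama resolution step by step. I would start by recalling the combinatorial data attached to an ideal with linear quotients: for each generator $m_j$ one has the set $\set(m_j)$ of variables $x_i$ such that $x_i \in \langle m_1, \dots, m_{j-1} \rangle : m_j$, and the admissible symbols are pairs $(m_j, \sigma)$ with $\sigma \subseteq \set(m_j)$, which index a homogeneous basis of the free modules in the resolution. A regular decomposition function $g$ is what makes the differentials on these symbols explicit and, crucially, well-behaved under the operation ``multiply by $x_i$ and decompose.'' The first step is to translate the mapping cone in homological degree: adding $m_k$ contributes a Koszul complex on the variables in $\set(m_k)$, which geometrically is the boundary complex of a simplex $\Delta_k$ on vertex set $\set(m_k)$ (or rather a simplex whose faces are the symbols $(m_k,\sigma)$), and the chain map that gets coned is realized by an attaching map gluing $\Delta_k$ along its part of the boundary into the previously built complex $X_{k-1}$.

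Next I would make precise the gluing: given the regular CW-complex $X_{k-1}$ supporting the resolution of $\langle m_1,\dots,m_{k-1}\rangle$, I attach a single new cell for each symbol $(m_k,\sigma)$, of dimension $|\sigma|$. The new maximal cell (for $\sigma = \set(m_k)$) is a simplex whose boundary facets of the form $(m_k, \sigma \setminus x_i)$ are identified — via the decomposition function — with the cells $(g(x_i m_k/\ldots), \ldots)$ already present in $X_{k-1}$, exactly as dictated by the Herzog–Takayama differential. The key point, and the reason the hypothesis of a \emph{regular} decomposition function is needed, is that this identification is consistent: regularity guarantees that when we restrict the decomposition function to faces of $\sigma$ we land on genuine subcomplexes, so the attaching map is a well-defined map from the boundary sphere $\partial \Delta_k$ into $X_{k-1}$ that is injective on the relevant faces. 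This is what keeps the resulting complex \emph{regular} (closed cells are attached along homeomorphisms onto subcomplexes) rather than merely a general CW-complex. I would phrase this as a lemma: the attaching data coming from the decomposition function defines a regular CW structure $X_k$ with $X_{k-1}$ as a subcomplex.

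Then I would verify that the augmented cellular chain complex of $X_k$, with the orientations/incidence numbers coming from the simplicial structure of each $\Delta_j$, is isomorphic as a complex of free $R$-modules (after the standard homogenization of the cellular chain complex by monomial labels $\lcm$ of vertex labels) to the resolution produced by the $k$-th mapping cone. This is essentially bookkeeping: the cellular differential decomposes, on the cell $(m_k,\sigma)$, into the ``internal'' Koszul part (faces $(m_k, \sigma\setminus x_i)$ still on $\Delta_k$) plus the ``glued'' part landing in $X_{k-1}$, and these two pieces are precisely the two components of the mapping cone differential. Since the mapping cone of a resolution is a resolution, and here (by the cited theorem of Herzog–Takayama) it is the \emph{minimal} one, $X_k$ supports that resolution; exactness can also be checked directly via the standard acyclicity criterion (for each monomial $\mathbf{x}^{\mathbf a}$, the subcomplex $X_k^{\le \mathbf{a}}$ of cells with label dividing $\mathbf{x}^{\mathbf a}$ is acyclic), using the inductive structure. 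The main obstacle I anticipate is the second step: checking that the attaching maps genuinely produce a \emph{regular} CW-complex — i.e., that the decomposition function never forces two distinct boundary faces of $\Delta_k$ to be identified with the same cell, and that the image of $\partial\Delta_k$ is a subcomplex — since this is exactly where the combinatorics of linear quotients and the regularity hypothesis have to do real work, and a naive choice of chain map (not coming from a regular decomposition function) would fail here.
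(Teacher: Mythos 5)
Your high-level framing is right --- build the complex inductively, glue a ``Koszul'' piece for each new generator, check that the cellular differential matches the Herzog--Takayama formula, and isolate regularity of the decomposition function as what makes the gluing consistent. But there is a genuine gap at the heart of the construction: you model the new cell $(m_k,\alpha)$ as a face of a single simplex (the cone on $\Delta_{\set(m_k)}$ with apex $m_k$). A $p$-dimensional simplex has $p+1$ facets, whereas the Herzog--Takayama differential of $(m,\alpha)$ with $|\alpha|=p$ has $2p$ codimension-one terms: the $p$ ``new'' faces $(m,\alpha\setminus j_i)$ \emph{and} the $p$ ``old'' faces $(b(x_{j_i}m),\alpha\setminus j_i)$. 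Already for $p=2$ you need a cell with four facets, not a triangle. So either your attaching map collapses one facet onto a union of several old cells (which destroys regularity and does not give the right incidence numbers), or the cell simply is not a simplex. This is not bookkeeping; it is the main construction the proof has to supply.

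The paper resolves this by realizing each basis element $(m,\alpha)$ as a cell $U(m,\alpha)=\bigcup_\sigma \ch(m,\alpha,\sigma)$, a union of up to $|\alpha|!$ geometric simplices indexed by permutations $\sigma$ of $\alpha$, where $\ch(m,\alpha,\sigma)$ is the convex hull in $\RR^n$ of the exponent vectors of $m, b(x_{\sigma_1}m), b(x_{\sigma_2}x_{\sigma_1}m),\dots$. Most of the work then goes into: (i) showing degenerate simplices are faces of nondegenerate ones and that nondegenerate ones are honest $p$-simplices (affine independence of the exponent vectors, using regularity of $b$); (ii) classifying interior versus exterior facets and proving each interior facet lies in exactly two simplices, so the $\ch(m,\alpha,\sigma)$ glue into a single cell whose boundary has exactly the $2p$ required facets; (iii) choosing compatible orientations so the internal facets cancel and the topological boundary of $U(m,\alpha)$ reproduces the Herzog--Takayama differential; and (iv) deducing regularity of the CW-complex from shellability of $U(m,\set(m))$. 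Your proposal would need all of this (or an equivalent combinatorial model with $2p$ facets per $p$-cell, such as the boxes of the homomorphism complex) before the ``verify the chain complexes agree'' step can even be stated.
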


\medskip

In Section \ref{MoreRes} we investigate other combinatorial types of cellular resolutions that can be recovered from the mapping cone construction.  In particular, we are interested in a family of cellular resolutions in the literature (\cite{CN}, \cite{NR}, \cite{Sine}, \cite{DE}) that all have similar combinatorial structure.  These complexes go by various names and are applied to different classes of ideals, including the \emph{complexes of boxes} resolutions of strongly stable and squarefree strongly stable ideals from \cite{NR}, as well as the \emph{homomorphism complex} resolutions of cointerval hypergraph edge ideals of \cite{DE}.  For a fixed ideal $I$ the combinatorial types of these complexes more or less coincide, but they look much different than the polyhedral complex obtained as the realization of the EK resolution described above (see examples below).

\medskip

We seek to relate these constructions.  In Section \ref{MoreRes} we show that the homomorphism complex resolution can be obtained as an iterated mapping cone with a different choice of `decomposition function'; in essence a different way to glue in the simplex at each step in the construction.  
In this way we provide a uniform description of various cellular cellular resolutions from the literature, answering a question of Mermin from \cite{Mermin}.  Our main result from that section is the following (see Section \ref{MoreRes} for definitions).

\medskip

\begin{Theorem} (Theorem \ref{Theorem_Homcone})
Suppose $I = I_H$ is a cointerval ideal associated to a cointerval hypergraph $H$, and let $X_H$ be the homomorphism complex supporting its minimal resolution.  Then under the lexicographic ordering of the generators of $I$, the iterated mapping cone resolution of $I$ is supported on $X_H$.
\end{Theorem}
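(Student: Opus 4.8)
The plan is to proceed by induction on the generators of $I = I_H$ listed in lexicographic order $(m_1, \dots, m_k)$, maintaining the invariant that after $j$ steps the iterated mapping cone resolution, built with a suitable decomposition function, is exactly the cellular resolution supported on the subcomplex $X_{H_j} \subseteq X_H$ associated to the first $j$ edges $e_1, \dots, e_j$ of $H$. Since a cointerval ideal has linear quotients with respect to the lexicographic order, the colon ideal $J_j = (m_1, \dots, m_{j-1}) : m_j$ is generated by a set $V_j$ of variables at each step, so the complex coned on in the $j$-th step is the Koszul complex on $V_j$, whose geometric realization is the full simplex $\Delta_j$ on vertex set $V_j$. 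Because $I$ has linear quotients the mapping cone resolution is minimal, and by \cite{DE} the homomorphism complex resolution is minimal; hence the two are isomorphic as complexes of free modules and the real content of the statement is the identification of the underlying regular CW-complexes together with their monomial-labelled cellular chain structure. We must therefore track cells, labels, and attaching maps, not merely graded Betti numbers.

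First I would establish the combinatorial structure of $X_H$ needed for the induction: that $X_H$ carries a filtration $X_{H_1} \subseteq \cdots \subseteq X_{H_k} = X_H$ by the homomorphism complexes of the initial subhypergraphs $H_j = \{e_1, \dots, e_j\}$, and that $X_{H_j}$ is obtained from $X_{H_{j-1}}$ by \emph{coning} with apex a new vertex labelled $e_j$: for each face $\sigma$ of a distinguished subcomplex $L_j \subseteq X_{H_{j-1}}$ one attaches a new cell $\sigma \ast e_j$. Using the cointerval hypothesis I would identify $L_j$ with the full simplex on $V_j$, realized inside $X_{H_{j-1}}$ as the subcomplex whose cells carry the labels forced by the cointerval ordering. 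This is the step where cointervality (rather than mere linear quotients) is essential: it is what makes the ``new part'' of $X_{H_j}$ a single cone over a simplex whose vertex set is precisely the set of variables generating the colon ideal $J_j$.

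Next I would match this with the mapping cone. At the $j$-th step the construction forms the mapping cone of a comparison map $\psi_j : K_j \to F_{j-1}$ lifting multiplication by $m_j$, where $K_j$ is the Koszul resolution of $R/J_j$ and $F_{j-1}$ is the resolution supported on $X_{H_{j-1}}$; geometrically this glues $\Delta_j$ simplicially into $X_{H_{j-1}}$ and then cones with apex $e_j$. The freedom in the choice of $\psi_j$ is exactly the freedom in choosing a decomposition function, and I would take the \emph{homomorphism-complex decomposition function} $g$ that sends each generator of $J_j$ (equivalently, each vertex of $\Delta_j$) to the corresponding vertex of $L_j \subseteq X_{H_{j-1}}$, extended to higher faces multiplicatively. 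With this choice the cells of $\mathrm{Cone}(\psi_j)$ are in a label-preserving bijection with the cells of $X_{H_j}$, and one is reduced to checking (i) that $g$ is a \emph{regular} decomposition function in the sense of Section~\ref{Mapcones}, so that Theorem~\ref{Theorem_cone} (or a direct argument) yields a regular CW-complex, and (ii) that the attaching map of $\sigma \ast e_j$ produced by $\psi_j$ agrees with the cellular boundary of $\sigma \ast e_j$ in $X_{H_j}$.

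The main obstacle is step (ii): one must compare the explicit differentials of the homomorphism complex resolution of \cite{DE} with the explicit differentials of the mapping cone under $g$, and verify that the signs and coefficients coincide after the identification of bases --- equivalently, that the particular decomposition function $g$ is regular for every cointerval ideal and that the induced coning is by an honest subcomplex rather than merely a simplicial map. Given the base case $j = 1$ (where $X_{H_1}$ is a point) and this inductive step, the theorem follows; as a byproduct one recovers the fact that $X_H$ is a regular CW-complex supporting the minimal resolution of $I_H$, now realized uniformly as an iterated mapping cone.
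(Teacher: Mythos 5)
Your overall architecture (induction on the lex-ordered generators, identifying the new cells at step $j$ with a cone over a simplex on the variables generating the colon ideal, and encoding the comparison map $\psi_j$ in a decomposition-type function) matches the paper's strategy. But there is a genuine gap at the point where you reduce the problem to ``checking (i) that $g$ is a \emph{regular} decomposition function in the sense of Section~\ref{Mapcones}, so that Theorem~\ref{Theorem_cone} \dots yields a regular CW-complex.'' The decomposition function that produces $X_H$ is \emph{not} regular in general: the map $c$ used in the paper sends $x_i m$ to $x_i m/x_{j_k}$ where $j_k$ is the smallest index of $m$ with $j_{k-1} < i \le j_k$, and this fails the commutation identity $c(x_s c(x_t m)) = c(x_t c(x_s m))$ already for the edge ideal of the complete $3$-graph on five vertices (take $m = x_3x_4x_5$, $\set(m)=\{1,2\}$: one order gives $x_1x_2x_5$, the other $x_1x_4x_5$). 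So the plan of verifying regularity and invoking Theorem~\ref{Theorem_cone} cannot be carried out, and the ``direct argument'' you mention in passing is in fact the entire content of the proof.

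What replaces regularity in the paper is a change in the \emph{shape} of the comparison map: the second sum in $\psi^{(j)}(m;\alpha)$ runs not over all of $\alpha$ but only over $T(\alpha)$, the set of maximal elements of $\alpha \cap A_\ell$ for each block $A_\ell$ of $\set(m)$. The chain-map identity $\psi \circ d_K = d \circ \psi$ is then verified by an explicit expansion of both compositions and a matching of index sets, using the partial commutativity of Remark~\ref{rem:s,t} (namely $c(x_s c(x_t m)) = c(x_s m)$ when $s < t$ lie in the same block $A_\ell$) to cancel the problematic terms. Your proposal does not anticipate this restriction to $T(\alpha)$, and without it the would-be chain map fails to commute with the differentials precisely because of the non-regularity above. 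To repair the argument you would need to either discover the $T(\alpha)$-restricted formula and verify the commutation directly (as the paper does), or find a genuinely geometric proof that the attaching maps agree, which would still have to confront the same sign and cancellation bookkeeping.
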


\medskip

The choices involved in realizing the mapping cone as a cellular resolution lead to a natural question of what \emph{all} possible realizations look like.  In Section \ref{RegRes} we fix a choice of basis for each free module in the mapping cone construction and consider the family of geometric realizations obtained by choosing different regular decomposition functions.  The gluing of the simplex at each step amounts to the choice of a map of chain complexes that lifts the given map of $R$-modules.   Finally, in Section \ref{ResSpace} we investigate the extent to which these choices can be realized as a single \emph{space of resolutions}.  

\medskip

\noindent
{\bf Acknowledgments.} The authors would like to thank Alexander Engstr\"om, J\"urgen Herzog, and Volkmar Welker for helpful conversations.

\section{Preliminaries}

For some fixed field $k$, we let $R = k[x_1, \dots, x_n]$ denote the polynomial ring on $n$ variables with its usual ${
\mathbb Z}^n$-grading, and let $I \subset R$ be a monomial ideal. We will be interested in describing (minimal) graded free resolutions of the ideal $I$.

\subsection{Mapping cones and linear quotients}\label{Mapcones}

We begin by recalling the mapping cone resolutions from \cite{HT}.  For this, let $I \subset R$ be a monomial ideal with an ordered set of generators $G(I) = (m_1, \dots, m_k)$ and define $I_j = (m_1, \dots, m_j)$.  Then for each $j$ there are exact sequences 

\[
0 \rightarrow R/(I_{j-1}:m_j) \rightarrow R/I_{j-1} \rightarrow R/I_j \rightarrow 0.
\]

Suppose we have we have resolutions $G$ and $F$ of, respectively, the $R$-modules in the first two positions of the sequence.  We then obtain a resolution of $R/I_j$ as a mapping cone of a homomorphism of complexes $\psi: G \rightarrow F$, where $\psi$ is a lift of the map of $R$-modules $R/(I_{j-1}:m_j) \rightarrow R/I_{j-1}$.  

\medskip

Recall that if $\psi: G \rightarrow F$ is a map of chain complexes then $C(\psi)$, the \emph{mapping cone} of $\psi$, is the chain complex defined by
\[C(\psi) = G[1] \oplus F ,\]
with differential 
\[d_{C(\psi)}(g^{n+1},f^n) = \big( -d_G(g^{n+1}), \psi(g^{n+1}) + d_F(f^n) \big).\]

\medskip

In general the resolution of $R/I_j$ obtained from a mapping cone is not minimal, and so we search for a class of ideals where one can inductively describe both the resolution of $R/(I_{j-1}:m_j)$ as well as the comparison map $\psi$.  In \cite{HT} the authors restrict to a class of ideals for which each of the colon ideals $I_{j-1} : m_j$ are generated by subsets of the variables, in which case the resolution of $R/(I_{j-1} : m_j)$ is resolved by a Koszul complex.  This motivates the following.

\begin{Definition}
A monomial ideal $I \subset R$ is said to have \emph{linear quotients} if there exists an ordering of the generators $I = (m_1, \dots, m_k)$ such that for each $j$ the colon ideal $I_j : m_j$ is generated by a subset of the variables, so that
\[I_j  : m_j = \langle x_{j_1}, \dots, x_{j_r}\rangle. \] 
\end{Definition}

\medskip

One can check (see \cite{HT}, attributed to Sk\"{o}lberg) that an ideal $I$ has linear quotients if and only if the first syzygy module of $I$ has a quadratic Gr\"{o}bner basis.  In the case that $I$ is squarefree, and hence $I$ is the Stanley-Reisner ideal $I_\Delta$ of a simplicial complex $\Delta$, this is equivalent to the Alexander dual $\Delta^*$ being (nonpure) shellable. 

\medskip


The property of having linear quotients is itself a generalization of the notion of a \emph{stable} ideal, as introduced in \cite{EK}.  If $w$ is a monomial in the polynomial ring $R = k[x_1, \dots, x_n]$ we let $\max(w)$ denote the largest index of the variables dividing $w$.  A monomial ideal $I \subset R$ is called \emph{stable} if for every monomial $w \in I$ and index $i < m = \max(w)$, we have that the monomial $x_i w/x_m$ belongs to $I$.  Stable ideals are in turn generalizations of \emph{strongly stable} (or sometimes \emph{shifted}, or \emph{0-Borel fixed}) ideals.  An ideal $I$ is \emph{strongly stable} if whenever $w \in I$ with $x_j$ dividing $I$, we have $x_i w/x_j \in I$ for all $i <j$.  Note that $j$ is \emph{not} required to be the largest index of variables dividing $w$, as in the definition of stable ideals.

\medskip

\noindent
To summarize, we have the following hierarchy of ideal containments:

$\{ \textrm{strongly stable}\} \subset \{ \textrm{stable} \} \subset \{ \textrm{having linear quotients} \}$. 



\medskip

In \cite{HT} the authors seek \emph{minimal} resolutions of ideals obtained from the mapping cone construction; this amounts to finding an explicit description of the comparison maps $\psi:G \rightarrow F$ introduced above.   The approach taken in \cite{HT} is to first restrict to ideals with linear quotients since in this case one can provide a description of the bases of the free modules in a free resolution.  We first establish some terminology.
\medskip

\begin{Definition}

Suppose $I$ has linear quotients with respect to the sequence $(m_1, \dots, m_k)$ of its generators, and for each $j$ with $ 1 \leq j \leq k$ let $I_j = \langle m_1, \dots, m_j \rangle$.  Let $M(I)$ denote the set of all monomials in $I$.  For each generator $m_j$, with $j = 1, \dots, k$, we define 

\[
\set(m_j) = \{k \in [n]: x_k \in \langle m_1,\dots,m_{j-1}\rangle:m_j\}. 
\]

\end{Definition}

In the case that $I$ has linear quotients, it is shown in \cite{HT} that the mapping cone construction produces a minimal free resolution of $I$, and furthermore that a \emph{basis} for each free module $F_i$ in the minimal free resolution can be explicitly described as follows.

\begin{Lemma}\cite[Lemma 1.5]{HT}
\label{sets}
Let $I$ be a monomial ideal with linear quotients.  Then the iterated mapping cone $F$, derived from the sequence $m_1 ,\ldots, m_k$, is a minimal graded free resolution
of $R/I$, and for all $i > 0$, the symbols
\[
(m;\alpha),\quad\quad {\rm where} \quad m \in G(I) \quad \alpha\subset \set(m),\quad |\alpha|=i-1,,
\]
form a homogeneous basis of the $i^{\rm th}$ module in the minimal resolution of $R/I$.
\end{Lemma}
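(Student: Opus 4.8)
The plan is to prove this by induction on $k$, the number of generators, following the mapping cone construction step by step and tracking how the basis evolves. The base case $k=1$ is immediate: $I_1 = \langle m_1 \rangle$ is principal, its minimal resolution is $0 \to R(-\deg m_1) \to R \to R/I_1 \to 0$, and $\set(m_1) = \emptyset$, so the only basis symbol $(m_1; \emptyset)$ in homological degree $1$ matches the single free generator. For the inductive step, I would assume the statement holds for $I_{j-1}$ and analyze the mapping cone $C(\psi)$ of the comparison map $\psi \colon G \to F$, where $F$ is the (already constructed, minimal by induction) resolution of $R/I_{j-1}$ and $G$ is the Koszul complex on the variables $x_{j_1}, \dots, x_{j_r}$ generating the colon ideal $I_{j-1} : m_j = \langle x_{j_1}, \dots, x_{j_r} \rangle$ — note that this set of indices is exactly $\set(m_j)$ by definition.

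The key computation is to identify the free modules of $C(\psi) = G[1] \oplus F$ in each homological degree. In degree $i$, the summand $F_i$ contributes (by induction) the basis symbols $(m; \alpha)$ with $m \in G(I_{j-1})$ and $\alpha \subseteq \set(m)$, $|\alpha| = i-1$; the shifted Koszul summand $G[1]_i = G_{i-1} = \bigwedge^{i-1} R^r$ contributes, for each $(i-1)$-subset $\beta \subseteq \{x_{j_1}, \dots, x_{j_r}\} = \set(m_j)$, exactly one generator, which we name $(m_j; \beta)$. Taken together these are precisely the symbols $(m;\alpha)$ with $m \in G(I_j)$, $\alpha \subseteq \set(m)$, $|\alpha| = i-1$, since $G(I_j) = G(I_{j-1}) \cup \{m_j\}$ and the sets $\set(m)$ for $m \ne m_j$ are unchanged when passing from $I_{j-1}$ to $I_j$. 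One must check the grading: the Koszul generator indexed by $\beta$ sits in internal degree $\deg m_j + \sum_{x_\ell \in \beta} 1$ after the appropriate twist coming from the map $R/(I_{j-1}:m_j) \to R/I_{j-1}$ sending $1 \mapsto m_j$, so $(m_j;\beta)$ is indeed homogeneous of the expected multidegree.

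Finally I would invoke the result of \cite{HT} (cited in the lemma statement itself) that for ideals with linear quotients this iterated mapping cone is in fact \emph{minimal} — equivalently, that all the comparison maps $\psi$ can be chosen with entries in the maximal ideal $\mm = \langle x_1, \dots, x_n \rangle$, so that no cancellation of free generators occurs in forming $C(\psi)$. Granting minimality, the count of free generators in each homological degree is exactly the count of basis symbols, and the symbols $(m;\alpha)$ as described form a homogeneous basis. I expect the main obstacle to be the bookkeeping in the inductive step: one must be careful that the indices appearing in $\set(m_j)$, which are defined via $\langle m_1, \dots, m_{j-1}\rangle : m_j$, coincide with the generators of the Koszul complex $G$ used at stage $j$, and that the multidegrees of the newly introduced symbols $(m_j;\beta)$ are correctly computed through the twist in the short exact sequence. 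Once the identification of generators is set up cleanly, the homogeneity and the basis property follow formally from the structure of the mapping cone together with the minimality result of \cite{HT}.
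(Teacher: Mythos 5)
The paper does not actually prove this lemma; it is quoted verbatim from \cite[Lemma 1.5]{HT} and used as imported input, so there is no in-paper argument to compare against. Your reconstruction of the bookkeeping is correct and is exactly how the construction is organized in \cite{HT}: the base case, the identification $C(\psi)_i = G_{i-1}\oplus F_i$ with $G_{i-1}=\bigwedge^{i-1}R^r$ indexed by $(i-1)$-subsets $\beta$ of $\set(m_j)$ (relabelled $(m_j;\beta)$), the observation that $\set(m_t)$ for $t<j$ depends only on $\langle m_1,\dots,m_{t-1}\rangle:m_t$ and hence is unchanged at later stages, and the multidegree $m_j\prod_{\ell\in\beta}x_\ell$ coming from the twist by multiplication by $m_j$ are all right.

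The genuine gap is the last step. The statement you are asked to prove asserts that the iterated mapping cone is a \emph{minimal} resolution with the given basis, and your argument for minimality is ``invoke the result of \cite{HT} that the iterated mapping cone is minimal'' --- i.e., you cite the lemma you are proving. Everything else in your write-up is formal once minimality is granted (a minimal resolution's generators in each homological degree are exactly the summands of the mapping cone, since no cancellation occurs), so the minimality claim is the only mathematical content, and it is not automatic: the mapping cone of $\psi\colon G\to F$ with $G$, $F$ minimal is minimal if and only if $\psi(G)\subseteq\mm F$, and a homogeneous lift of $R/(I_{j-1}:m_j)\xrightarrow{\cdot m_j}R/I_{j-1}$ could a priori have unit entries in higher homological degrees even though $\psi_0$ is multiplication by $m_j\in\mm$. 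A complete proof must construct (or choose) the lifts $\psi_i$ and verify degree by degree that their images lie in $\mm F_i$ --- e.g., $\psi_1(e_\ell)$ can be taken to be $\frac{x_\ell m_j}{m_t}(m_t;\emptyset)$ for some $t<j$ with $m_t\mid x_\ell m_j$, where the coefficient has positive degree because $m_t$ and $m_j$ are both minimal generators, and one then continues inductively up the Koszul complex. Without that verification the induction does not close.
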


In \cite{HT} the authors also provide an explicit description of the \emph{differentials} in these resolutions for a certain subclass of ideals that satisfy an extra condition.  In this context, let $M(I)$ denote the set of all monomials in $I$, and define the \emph{decomposition function of $I$} to be the assignment $b: M(I)\rightarrow G(I)$ given by: define $b(m) = m_j$ if $j$ is the smallest number such that $m\in \langle m_1,\ldots,m_{j}\rangle$.  The decomposition function is similar to the description of the differentials in the EK resolution of stable ideals introduced in \cite{EK}.  Indeed, when restricted to the class of stable ideals, the mapping cone construction recovers the EK resolution.  To describe the differentials from \cite{HT} we will need  to assume one further condition.

\begin{Definition} \label{def:regular}
The decomposition function of $I$ is said to be {\emph{regular} }if for each $m\in G(I)$ and every
$t\in \set(m)$ we have
\[
 \quad\set(b(x_t m))\subseteq \set(m).
\]
In this case for each $m\in G(I)$ and $t,s\in \set(m)$
 we have
\[
(*)\quad \quad b(x_s b(x_t m))=b(x_t b(x_s m)).
\]
Note that in degree two for $t,s\in\set(m)$, in the case that $b(x_t b(x_s m))\neq b(x_t m)$, we have
$b(x_t b(x_s m))= b(x_sb(x_t m))=x_t x_s$.
\end{Definition}

\begin{Example}\label{linear_quotients}
Consider the ideal $I = \langle x_1x_3x_4, x_1x_3x_5, x_1x_2x_4, x_1x_4x_5, x_2x_3x_4, x_2x_3x_5 \rangle$.  Then $I$ has linear quotients with respect to this order of generators, where for instance $I_5: (x_2x_3x_5) = \langle x_1, x_4 \rangle$.  In addition, one can check that the decomposition function $b$ for this ideal is regular, where for example $b(x_2x_3(x_1x_4x_5))=b(x_3x_2(x_1x_4x_5))$.  One can also check that $I$ is not stable and also not cointerval (see Section \ref{HomRes} for a definition of the latter). 

\end{Example}

For the class of ideals with linear quotients and regular decomposition functions we have the following result.

\begin{Theorem} \cite[Theorem 1.12.]{HT}
\label{H}
Let $I$ be a monomial ideal with linear quotients, and $F$ the graded
minimal free resolution of $R/I$. Suppose the decomposition function $b : M(I)\rightarrow
G(I)$ is regular. Then the chain map $d$ of $F$ is given by
 \[
     d(m;\alpha) =- \sum_{{j_i}\in \alpha}(-1)^{i-1} x_{j_i} (m;\alpha\setminus {j_i}) +
\sum_{{j_i}\in \alpha}(-1)^{i-1} \frac{x_{j_i} m}{b(x_{j_i}m)} (b(x_{j_i} m);\alpha\setminus {j_i})
\]
\[
=
\sum_{{j_i}\in \alpha}(-1)^{i} x_{j_i} (m;\alpha\setminus {j_i})-
\sum_{{j_i}\in \alpha}(-1)^{i} \frac{x_{j_i} m}{b(x_{j_i}m)} (b(x_{j_i} m);\alpha\setminus {j_i})
\]
if $\sigma\neq \emptyset$, where $\alpha=\{{j_1},\ldots,{j_p}\}\subset \set(m)$ with $j_1<\cdots<j_p$, and
$d(m,\emptyset) = m$ otherwise.
 \end{Theorem}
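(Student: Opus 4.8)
By Lemma~\ref{sets} the minimal free resolution $F$ of $R/I$ is already known to be the iterated mapping cone built from $(m_1,\dots,m_k)$, with the basis $(m;\alpha)$ described there; what remains is to pin down its differential, and I would do this by induction on the number $j$ of generators used. Concretely, I would show that the resolution $F^{(j)}$ of $R/I_j$ produced after $j$ mapping-cone steps satisfies the displayed formula on all basis elements $(m;\alpha)$ with $m\in G(I_j)$. The case $j=1$ is immediate: $F^{(1)}$ is $R\xrightarrow{\ \cdot m_1\ }R$, and $d(m_1;\emptyset)=m_1$.

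\textbf{The inductive step and the choice of comparison map.} At step $j$ we have $F^{(j)}=C(\psi)$, the mapping cone of a lift $\psi\colon K\to F^{(j-1)}$ of the module map $R/(I_{j-1}:m_j)\to R/I_{j-1}$, $1\mapsto m_j$; by linear quotients $K$ is the Koszul complex on $\{x_t : t\in\set(m_j)\}$, with basis $e_\alpha$ ($\alpha\subseteq\set(m_j)$) and $d_K(e_\alpha)=\sum_{t\in\alpha}\varepsilon(t,\alpha)x_t e_{\alpha\setminus t}$ for the usual Koszul signs $\varepsilon$. Since any two lifts are chain homotopic and give isomorphic cones, it suffices to work with one convenient $\psi$; I would take
\[
\psi(e_\alpha)\;=\;\sum_{t\in\alpha}\varepsilon(t,\alpha)\,\frac{x_t m_j}{b(x_t m_j)}\,\bigl(b(x_t m_j);\,\alpha\setminus t\bigr)\quad(\alpha\neq\emptyset),\qquad \psi(e_\emptyset)=m_j .
\]
One checks first that these symbols are genuine basis elements of $F^{(j-1)}$: as $t\in\set(m_j)$ we get $x_t m_j\in I_{j-1}$, hence $b(x_t m_j)\in G(I_{j-1})$, and $\alpha\setminus t\subseteq\set(b(x_t m_j))$ by the combinatorics of the sets $\set(\cdot)$ under linear quotients --- here the containment part of Definition~\ref{def:regular}, iterated down $F^{(j-1)}$, is what is needed.

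\textbf{Why $\psi$ is a chain map.} This is the only real computation. Expanding $d_{F^{(j-1)}}\bigl(\psi(e_\alpha)\bigr)$ via the inductive form of $d_{F^{(j-1)}}$, the terms fall into two families. The \emph{straight} terms $\varepsilon\cdot x_s\frac{x_t m_j}{b(x_t m_j)}(b(x_t m_j);\alpha\setminus\{s,t\})$ reassemble, after the standard Koszul sign identity $\varepsilon(t,\alpha)\varepsilon(s,\alpha\setminus t)=-\varepsilon(s,\alpha)\varepsilon(t,\alpha\setminus s)$, into exactly $\psi\bigl(d_K(e_\alpha)\bigr)$. The remaining \emph{diagonal} terms, of shape $\varepsilon\cdot\frac{x_s x_t m_j}{b(x_s b(x_t m_j))}(b(x_s b(x_t m_j));\alpha\setminus\{s,t\})$, cancel in pairs indexed by the unordered sets $\{s,t\}\subseteq\alpha$: by the identity $(*)$ of Definition~\ref{def:regular} one has $b(x_s b(x_t m_j))=b(x_t b(x_s m_j))$, so the two partners carry identical monomial coefficients and identical symbols but opposite Koszul signs. (The degenerate sub-case noted at the end of Definition~\ref{def:regular} is handled within the same bookkeeping.) Hence $d_{F^{(j-1)}}\psi=\psi\,d_K$.

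\textbf{Conclusion.} Granting this, the mapping-cone differential of $C(\psi)=K[1]\oplus F^{(j-1)}$ is, on the new generators $(m_j;\alpha)$, equal to $-d_K(e_\alpha)+\psi(e_\alpha)$, which is precisely the displayed formula once $\varepsilon(j_i,\alpha)$ is written as $(-1)^{i-1}$ for $\alpha=\{j_1<\dots<j_p\}$; and on the old generators it is $d_{F^{(j-1)}}$, which has the required shape by induction, using that the decomposition function of $I_j$ restricts on $M(I_{j-1})$ to that of $I_{j-1}$ (appending $m_j$ last does not change $b$ there). Every coefficient lies in $\mm$ --- the $x_t$ obviously, and $\frac{x_t m_j}{b(x_t m_j)}$ because $x_t m_j\notin G(I)$, being properly divisible by $m_j$ --- so $F=F^{(k)}$ is minimal with the stated differential, completing the induction. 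The crux is the chain-map verification, where the cancellation of the diagonal cross-terms is exactly the cocycle condition $(*)$; the only other subtlety is the a~priori well-definedness of the symbols $(b(x_t m);\alpha\setminus t)$, which likewise relies on regularity.
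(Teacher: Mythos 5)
The paper offers no proof of this statement --- it is quoted verbatim from \cite{HT} --- so the relevant comparison is with the source's argument (which the paper itself replays for Theorem \ref{Theorem_Homcone}, ``following the strategy of the proof of Theorem 1.12''), and your proposal is essentially that argument: induct on the number of generators, resolve $R/(I_{j-1}:m_j)$ by a Koszul complex, define the comparison map $\psi$ via the decomposition function, verify $d\psi=\psi d_K$ using the commutation identity $(*)$, and read off the cone differential. One inaccuracy is worth fixing: you assert that the symbols $(b(x_t m_j);\alpha\setminus t)$ are ``genuine basis elements'' because of ``the containment part of Definition \ref{def:regular},'' but regularity gives $\set(b(x_t m))\subseteq\set(m)$, which is the \emph{opposite} of the inclusion $\alpha\setminus t\subseteq\set(b(x_t m))$ you need, and that inclusion genuinely fails (e.g.\ in Example \ref{exam:generate}, $2\notin\set(b(x_3\cdot x_1x_4x_5))$ although $2\in\set(x_1x_4x_5)$). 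The correct reading, as in \cite{HT}, is the convention that $(u;\sigma)=0$ whenever $\sigma\not\subseteq\set(u)$; with that convention your chain-map verification goes through, but the ``diagonal'' cancellation then requires checking that when one partner of a pair $\{s,t\}$ degenerates (i.e.\ $s\notin\set(b(x_t m))$, so $b(x_sb(x_tm))=b(x_tm)$ and the term merges with a ``straight'' term) the bookkeeping still closes --- this is precisely the degree-two caveat at the end of Definition \ref{def:regular}, which you acknowledge only parenthetically and which is the one place the argument needs actual care.
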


There are several classes of ideals that are known to have linear quotients that admit regular decomposition functions.  These include stable ideals and matroidal ideals (as shown in (\cite{HT}), as well as \emph{completely lexsegment ideals} \cite{EOS} and the Alexander dual of the generalized Hibi ideals from \cite{EHM}.

\subsection{Cellular resolutions}

One natural way to describe a resolution of an ideal $I$ is through the construction of a polyhedral (or more general CW-) complex ${\mathcal X}_I$ whose faces (vertices, edges, and higher dimensional cells) are labeled by monomials in such a way that the chain complex determining the cellular homology of ${\mathcal X}$ realizes a graded free resolution of $I$.   The study of cellular resolutions was first explicitly initiated in \cite{BayerSturmfels} (to where we refer for further details).  Cellular resolutions have the advantage that algebraic resolutions can in some sense be given a global description, and they also lead to combinatorially interesting geometric complexes.

\medskip

The well-known Taylor resolution \cite{Taylor} guarantees that all monomial ideals have a (usually far from minimal) cellular resolution supported on a simplex.  In this case that $I$ is generated by variables, the Taylor resolution is in fact minimal, and recovers the Koszul resolution of $I$.  This fact will be used heavily in our construction of the resolution of ideals with linear quotients, where by definition certain colon ideals are generated by variables.

\medskip

Not all minimal free resolutions of monomial ideals are supported by a CW-complex, as is shown in \cite{Vel}, but a natural question to ask is which algebraic complexes can be realized cellularly (and to provide a geometric/combinatorial description).   In \cite{BatziesWelker} Batzies and Welker develop an algebraic version of Discrete Morse theory and construct CW complexes that support minimal cellular resolutions of shellable ideals (which can be seen to coincide with the class of ideals with linear quotients).  However, there construction is not explicit and they make no claims regarding the regularity of their complexes.

As mentioned in the introduction, Mermin \cite{Mermin} has shown that the Eliahou-Kervaire resolution of a stable ideal $I$ is cellular and supported on a regular CW-complex.  In the case that $I$ is generated in a fixed degree one can realize the supporting complex as a subcomplex of a certain subdivision of a dilated simplex (see the next section for details).    In \cite{Sine} Sinefakopoulos describes cellular resolutions of the class of strongly stable ideals generated in a fixed degree but obtains combinatorially distinct complexes.  In the case that $I = \langle x_1, \dots x_n \rangle^d$, the $d$th power of the maximal graded ideal in $R$, the complexes are different subdivisions of a dilated simplex.  In \cite{DE} the authors describe cellular resolutions of what they call \emph{cointerval} ideals via spaces of graph homomorphisms, extending the `complexes of boxes' constructions from \cite{NR} and \cite{CN}.  In \cite{NPS} the authors construct minimal cellular resolutions of monomial ideals arising from matroids and oriented matroids. In \cite{DJS} methods from tropical convexity are employed to build cellular resolutions of a certain class of monomial ideals.  For a particular ideal, these constructions can lead to (geometric) complexes with different combinatorial structure.

\medskip

As we will see in Section \ref{MoreRes}, many of these constructions can be realized as different instances of the cellular mapping cone construction.

\section{The generalized Eliahou Kervaire cellular resolution} \label{EKres}

In this section we show that the generalized Eliahou-Kervaire (EK) resolution described in Theorem \ref{H} is supported on a cellular complex.  We will extend the construction of Mermin \cite{Mermin} (where the case of stable ideals was considered) to realize the supporting space as a $CW$-complex obtained by gluing together certain simplices corresponding to data coming from the generators of $I$.   

\medskip

Throughout this section we will use the notation and terminology from Section \ref{Mapcones}.  In particular we assume that our ideal $I$ is a monomial ideal with linear quotients with respect to the sequence of generators $G(I) = (m_1,\ldots,m_k)$, where we use $m_1 > m_2 > \cdots > m_k$ to denote the ordering of the generators defining the linear quotient (this will be convenient when we no longer have indices on the generators).  We furthermore assume that its decomposition function $b: M(I)\rightarrow G(I)$ is regular (see \ref{def:regular}).

\subsection{Cellular construction}
In a description of any cellular resolution we need to label the 0-cells of a CW-complex with monomials corresponding to generators of $I$.  We first note that each monomial $m = x_1^{a_1} x_2^{a_2} \cdots x_n^{a_n} \in G(I)$ can be regarded as a point $(a_1,a_2, \dots a_n) \in {\mathbb R}^n$ by considering its exponent vector.  We will often abuse notation and use $m$ to denote this point in ${\mathbb R}^n$.

Now, for each $m \in G(I)$ and $\alpha=\{{j_1},\ldots,{j_p}\}\subset \set(m)$ with $j_1<\cdots<j_p$, we let $\sigma= (\sigma_1,\ldots,\sigma_p)$ be
a permutation of $\{j_1,\ldots,j_p\}$.  We define ${\rm ch}(m,\alpha,\sigma)$ to be the subset of ${\mathbb R}^n$ obtained as the convex hull of the subset of generators of $I$ that we reach by applying the decomposition function in the order prescribed by the permutation $\sigma$:
\[
{\rm ch}(m,\alpha,\sigma) = {\rm Conv} \big(m,  b(x_{\sigma_1}m),  b(x_{\sigma_2}x_{\sigma_1}m),\ldots, b(x_{\sigma_p}\cdots x_{\sigma_1}m) \big)
\]
Here we use the shorthand notation $b(x_{\sigma_2}x_{\sigma_1}m) = b(x_{\sigma_2}b(x_{\sigma_1}m))$.  We say that ${\rm ch}(m,\alpha,\sigma)$ is \emph{nondegenerate} if there are no repetitions of monomials involved in the description of ${\rm ch}(m,\alpha,\sigma)$; otherwise we say that ${\rm ch}(m,\alpha,\sigma)$ is \emph{degenerate} (see Example \ref{exam:generate}).  We will see in Corollary \ref{cor:simplex} that in this case ${\rm ch}(m,\alpha,\sigma)$ is in fact a $p$-dimensional simplex.

\medskip

\begin{Lemma} \label{degenerate}
Let $m \in G(I)$, $\alpha \subset \set(m)$, and $\sigma$ a permutation of $\alpha$. If $\ch(m,\alpha,\sigma)$ is degenerate then there exists a permutation
$\sigma^{\prime}$ of $\alpha$ such that
$\ch(m,\alpha,\sigma^{\prime})$ is nondegenerate and $\ch(m,\alpha,\sigma)$ is a face of $\ch(m,\alpha,\sigma^{\prime})$.

\end{Lemma}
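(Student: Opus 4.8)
The plan is to translate the combinatorial content of the cells $\ch(m,\alpha,\sigma)$ into the language of the operators $\phi_t\colon G(I)\to G(I)$, $\phi_t(g)=b(x_tg)$, and then to show that a degenerate permutation can always be ``repaired'' by reinserting the lost indices into the chain at the right places.

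I would begin by recording the elementary properties of these operators. The regularity relation $(*)$, supplemented by the trivial cases in which $t$ or $s$ lies outside the relevant $\set$, shows that the $\phi_t$ commute pairwise on $G(I)$; hence for $S=\{t_1,\dots,t_k\}\subseteq\set(m)$ the generator $b_S(m):=\phi_{t_k}\cdots\phi_{t_1}(m)$ does not depend on the chosen order, and the vertices of $\ch(m,\alpha,\sigma)$ are precisely the monomials $b_{S_i}(m)$, where $\emptyset=S_0\subset S_1\subset\cdots\subset S_p=\alpha$ is the chain of prefix sets of $\sigma$. Regularity also gives that $\set(b_S(m))$ is inclusion-reversing in $S$; that $\phi_t(g)=g$ exactly when $t\notin\set(g)$, while an ``active'' step $\phi_t(g)\neq g$ strictly decreases the index of $g$ in the sequence $(m_1,\dots,m_k)$; and, by a gcd argument ($b(x_ag)=b(x_bg)$ would force the common generator to divide $\gcd(x_ag,x_bg)=g$), that $\phi_a(g)\neq\phi_b(g)$ for distinct $a,b\in\set(g)$. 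In this dictionary $\ch(m,\alpha,\sigma)$ is nondegenerate iff every step of its chain is active; and if it is degenerate then, after deleting the inactive steps, the active ones form a chain $\emptyset=R_0\subset R_1\subset\cdots\subset R_q=R\subsetneq\alpha$ which — because the active steps of $\sigma$ stay active once the intervening inactive steps are removed, using monotonicity of $\set(b_\bullet(m))$ — is a nondegenerate chain for $(m,R)$, and whose vertices $b_{R_0}(m),\dots,b_{R_q}(m)$ are exactly the distinct vertices of $\ch(m,\alpha,\sigma)$.

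The technical core is an insertion step: given a nondegenerate chain $\emptyset=T_0\subset\cdots\subset T_r=\beta$ for $(m,\beta)$ and an index $t\in\set(m)\setminus\beta$, there is a nondegenerate chain for $(m,\beta\cup\{t\})$ whose vertex set contains that of the given chain. To see this, let $a^{\ast}$ be the largest index with $t\in\set(b_{T_{a^{\ast}}}(m))$ — it exists since $t\in\set(m)=\set(b_{T_0}(m))$ and $\set(b_{T_\bullet}(m))$ decreases — and insert $t$ just after $T_{a^{\ast}}$, i.e.\ pass to $T_0\subset\cdots\subset T_{a^{\ast}}\subset T_{a^{\ast}}\cup\{t\}\subset T_{a^{\ast}+1}\cup\{t\}\subset\cdots\subset T_r\cup\{t\}$. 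Commutativity, the maximality of $a^{\ast}$, and the distinctness statement $\phi_a(g)\neq\phi_b(g)$ then show that every step of the new chain is again active, that for $a>a^{\ast}$ one has $b_{T_a\cup\{t\}}(m)=b_{T_a}(m)$ (since $t\notin\set(b_{T_a}(m))$) so that the only new vertex is $\phi_t(b_{T_{a^{\ast}}}(m))$, and that all old vertices persist. Now, starting from the nondegenerate chain for $(m,R)$ extracted above and inserting the elements of $\alpha\setminus R$ one at a time (each lies in $\set(m)$ because $\alpha\subseteq\set(m)$), one obtains a nondegenerate chain for $(m,\alpha)$, hence a permutation $\sigma'$ with $\ch(m,\alpha,\sigma')$ nondegenerate and with every vertex of $\ch(m,\alpha,\sigma)$ among its vertices. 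Since a nondegenerate cell is a geometric $p$-simplex (Corollary~\ref{cor:simplex}), those vertices span a face of $\ch(m,\alpha,\sigma')$, and that face is $\ch(m,\alpha,\sigma)$.

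The step I expect to be most delicate is the insertion lemma — precisely, checking that reinserting $t$ does not introduce a new inactive step further along the chain. The crux is that inserting $t$ at position $a^{\ast}$ replaces each later intermediate generator $b_{T_a}(m)$ by $\phi_t(b_{T_a}(m))$, and one must know this equals $b_{T_a}(m)$; this is exactly where the maximality of $a^{\ast}$ and the monotonicity $\set(b_{T_{a+1}}(m))\subseteq\set(b_{T_a}(m))$ enter, while for the single critical index $a=a^{\ast}$ one needs $\phi_t(b_{T_{a^{\ast}}}(m))\neq\phi_u(b_{T_{a^{\ast}}}(m))$ for the index $u$ added by the next original step, which is the gcd-based distinctness. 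The remaining ingredients — commutativity of the $\phi_t$ on $G(I)$ and the identification of the active subchain of a degenerate $\sigma$ as a nondegenerate chain for its support — are routine once the monotonicity and distinctness properties are in hand.
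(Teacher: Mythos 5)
Your argument is correct, and while the underlying mechanism is the same as the paper's --- an inactive index can always be relocated to an earlier position in the permutation, where it acts on a generator with a larger $\set$, and hence becomes active --- your execution is genuinely different and more systematic. The paper locates the first inactive position and performs successive adjacent transpositions, pushing the offending index backward until it first becomes active (in the worst case to the front, where membership in $\set(m)$ guarantees activeness); it is terse about why the later steps remain active after each swap and why the vertex set only grows. You instead delete all inactive indices at once, observe via commutativity of the operators $\phi_t$ (which does follow from regularity together with $\set(b(x_tm))\subseteq\set(m)$, exactly as you indicate) that the surviving subchain is a nondegenerate chain with the same vertex set, and then reinsert each missing index at the \emph{last} position where it still lies in the relevant $\set$. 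Your treatment of the one delicate point --- that the step immediately after the insertion stays active, using maximality of $a^{\ast}$ to force $t\notin\set(\phi_u(g))$ and then the $\gcd$-based distinctness $\phi_t(g)\neq\phi_u(g)$ --- is right and supplies the detail the paper elides. What your route buys is a clean closed form ($b_S(m)$ independent of order, prefix-set chains) that also makes transparent why every vertex of the degenerate cell survives into the repaired one; what it costs is the up-front verification of commutativity and monotonicity. Two minor remarks: your appeal to Corollary~\ref{cor:simplex} is a forward reference relative to the paper's ordering, but there is no circularity since that corollary depends only on Lemma~\ref{primary}; and your $\gcd$ argument implicitly uses that $b(w)$ divides $w$, which holds by the minimality in the definition of the decomposition function and is worth stating.
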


\begin{proof}
Suppose that $\ch(m,\alpha,\sigma)$ is degenerate.
Let $s$ be the first index with the property that applying $b$ does not lead to a new generator, so that 
\[
b(x_{\sigma_{s+1}} b(x_{\sigma_s}\cdots x_{\sigma_1}m)) = b(x_{\sigma_s}\cdots x_{\sigma_1}m).
\]

Let $\sigma^\prime=(\sigma_1,\ldots,\sigma_{s-1},\sigma_{s+1},\sigma_s,\ldots,\sigma_p)$ be the permutation
obtained from $\sigma$ from switching positions $s$ and $s+1$.   

If $b(x_{\sigma_{s+1}}b(x_{\sigma_{s-1}}\cdots x_{\sigma_{1}}m))\neq b(x_{\sigma_{s-1}}\cdots x_{\sigma_{1}}m)$ then we get that $\ch(m,\alpha,\sigma)$ is a face of $\ch(m,\alpha,\sigma^\prime)$.  Otherwise we push $\sigma_{s+1}$ further back by switching $\sigma_{s-1}$ and $\sigma_{s+1}$ to get the permutation $\sigma^{\prime \prime} = (\sigma_1, \dots, \sigma_{s+1},\sigma_{s-1}, \sigma_s, \dots, \sigma_p)$.  We then apply the above argument for $s-1$ (instead of $s$) and in order to find a permutation $\sigma^{\prime \prime}$ such that $\ch(m,\alpha,\sigma'')$ contains $\ch(m,\alpha,\sigma')$ as a face.  In the worst case we consider the permutation $(\sigma_{s+1}, \sigma_1, \dots, \sigma_p)$ and we have $b(x_{\sigma_{s+1}}m) \neq m$ as desired.

By continuing the same argument, we keep increasing dimensions and we get a nondegenerate $\ch(m,\alpha,\sigma')$ containing $\ch(m,\alpha,\sigma)$ as a face.
\end{proof}

\medskip

\begin{Example}\label{exam:generate}
We return to the ideal $I$ from Example \ref{linear_quotients}, where 
\[I = \langle x_1x_3x_4, x_1x_3x_5, x_1x_2x_4, x_1x_4x_5, x_2x_3x_4, x_2x_3x_5 \rangle\]
\noindent
has linear quotients with respect to the given ordering of the generators.  One can check that $\set(x_1x_4x_5) = \{2,3\}$.  The simplices corresponding to the permutations $\sigma = (23)$ and $\sigma' = (32)$ are described below.
\begin{itemize}
\item [(i)] The simplex ${\rm ch}(x_1x_4x_5,\{2,3\},(32))$ is  {\it degenerate} since it is the convex hull of the vectors corresponding to $x_1x_4x_5$ and $x_1x_3x_4=b(x_3(x_1x_4x_5))$.  Here the fact that $2\not\in \set(x_1x_3x_4)$ implies degeneracy (see Figure \ref{fig_degen}).

\smallskip

\item[(ii)] 
Switching the positions of 2 and 3 gives us the permutation $\sigma' = (32)$.  The resulting simplex ${\rm ch}(x_1x_4x_5,\{2,3\},(32))$ is then {\it nondegenerate} since it is the convex hull of the vectors corresponding to $x_1x_4x_5, x_1x_2x_4=b(x_2(x_1x_4x_5))$, and $x_1x_3x_4=b(x_3(x_1x_2x_4))$.  Note that ${\rm ch}(x_1x_4x_5,\{2,3\},\sigma)$ is a face of ${\rm ch}(x_1x_4x_5,\{2,3\},\sigma')$.

\end{itemize}

\end{Example}


\begin{figure}\label{fig_degen}
\begin{center}
  \includegraphics[scale = 0.9]{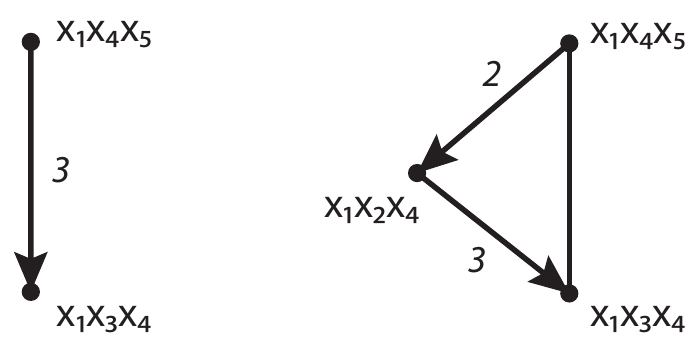}
\label{Degenerate}
    \caption{The degenerate simplex ${\rm ch}(x_1x_4x_5,\{2,3\},(32))$ on the left, and the nondegenerate ${\rm ch}(x_1x_4x_5,\{2,3\},(23))$ on the right.}

\end{center}
\end{figure}


\medskip

\begin{Lemma}
\label{primary}
Suppose $\ch(m,\alpha,\sigma)$ is nondegenerate.
Then for each $\ell$ and $j$ with $j< \ell$ we have
\[
\quad b(x_{\sigma_{j}} x_{\sigma_{j-1}}\cdots x_{\sigma_{1}} m)\neq x_{\sigma_{j}} b(x_{\sigma_{j-1}}\cdots x_{\sigma_{1}} m)/x_{\sigma_{\ell}}.
\]
\end{Lemma}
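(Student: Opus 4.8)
The plan is to unwind the definitions and argue by contradiction, exploiting the hypothesis that $\ch(m,\alpha,\sigma)$ is nondegenerate. Write $w = x_{\sigma_{j-1}}\cdots x_{\sigma_1}m$ and $m' = b(w)$, so that $m'$ is the vertex of $\ch(m,\alpha,\sigma)$ reached after $j-1$ steps, and the next vertex is $b(x_{\sigma_j} m')$. Since the chain is nondegenerate, $b(x_{\sigma_j}m') \neq m'$, so applying $b$ to $x_{\sigma_j}m'$ genuinely strips off some variable: there is an index $t$ with $x_t$ dividing $x_{\sigma_j}m'$ and $b(x_{\sigma_j}m') = x_{\sigma_j}m'/x_t$ (here using that $t \in \set(m')$ and the structure of the decomposition function for ideals with linear quotients). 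Now suppose toward a contradiction that the claimed equality fails to fail, i.e.
\[
b(x_{\sigma_j}m') = \frac{x_{\sigma_j}m'}{x_{\sigma_\ell}}
\]
for some $\ell$ with $j < \ell$. This forces $t = \sigma_\ell$; in particular $x_{\sigma_\ell}$ divides $x_{\sigma_j}m'$, and since $\sigma_\ell \neq \sigma_j$ (the $\sigma_i$ are distinct), $x_{\sigma_\ell}$ divides $m'$.

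The key step is then to derive a contradiction with the remaining entries of the permutation. Since $\ell > j$, the variable $x_{\sigma_\ell}$ is still to be used in the chain after step $j$: at some later stage $r$ with $j \le r < \ell$ we will form $b(x_{\sigma_{r+1}}\cdots)$ with $\sigma_{r+1}$ eventually equal to $\sigma_\ell$ — more precisely, $\sigma_\ell$ appears as one of the indices $\sigma_{j+1},\dots,\sigma_p$ applied later. The plan is to track $x_{\sigma_\ell}$ through the chain: at step $j$ we have just removed $x_{\sigma_\ell}$ from the monomial (since $b(x_{\sigma_j}m') = x_{\sigma_j}m'/x_{\sigma_\ell}$ and $x_{\sigma_\ell} \mid m'$ means the exponent of $x_{\sigma_\ell}$ drops), and I will need the regularity condition $(*)$ and the inclusion $\set(b(x_tm))\subseteq \set(m)$ from Definition \ref{def:regular} to control how $\set$ evolves. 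The idea is that once $x_{\sigma_\ell}$ has been consumed at step $j$, multiplying by $x_{\sigma_\ell}$ at the later step where $\sigma_{\cdot} = \sigma_\ell$ and applying $b$ will either return the same generator (forcing degeneracy at that later step, contradicting nondegeneracy of the whole chain) or produce a vertex already appearing earlier, again contradicting nondegeneracy. Either way we contradict the hypothesis.

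I would organize this as: (1) reduce to the statement that $b(x_{\sigma_j}m')$ cannot equal $x_{\sigma_j}m'/x_{\sigma_\ell}$ for $\ell > j$; (2) observe this equality forces $x_{\sigma_\ell} \mid m'$ with the $\sigma_j$-st step removing exactly $x_{\sigma_\ell}$; (3) locate the later step at which index $\sigma_\ell$ is applied and use $\set(b(x_tm))\subseteq\set(m)$ repeatedly to show $x_{\sigma_\ell}$ does not reappear as a divisor in the intervening monomials, so that applying $b$ at that step is degenerate; (4) conclude this contradicts nondegeneracy of $\ch(m,\alpha,\sigma)$. The main obstacle I expect is step (3): carefully verifying that the exponent of $x_{\sigma_\ell}$ stays controlled through all the intermediate applications of $b$, which needs the regular-decomposition-function hypothesis in an essential way — in particular the commutativity relation $(*)$ may be needed to reorder the intermediate steps so that the "$x_{\sigma_\ell}$ is used immediately after step $j$" case can be analyzed directly, reducing the general situation to a two-step computation $b(x_{\sigma_\ell} b(x_{\sigma_j} m'))$. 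Once reduced to that two-step case, the degeneracy is visible: we are multiplying back the very variable we just removed, and regularity ensures $b$ of the result lands on $b(x_{\sigma_j}m')$ or earlier.
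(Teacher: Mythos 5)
Your plan is essentially the paper's proof: assume $b(x_{\sigma_j}m') = x_{\sigma_j}m'/x_{\sigma_\ell}$ and use the regularity relation $(*)$ to commute $x_{\sigma_\ell}$ backward through the chain until it sits adjacent to step $j$, where $x_{\sigma_\ell}\cdot\bigl(x_{\sigma_j}m'/x_{\sigma_\ell}\bigr) = x_{\sigma_j}m'$ collapses the computation and yields $b(x_{\sigma_\ell}\cdots x_{\sigma_1}m) = b(x_{\sigma_{\ell-1}}\cdots x_{\sigma_1}m)$, contradicting nondegeneracy. The exponent-tracking difficulty you anticipate in step (3) disappears once the $(*)$-commutation is carried out, since the contradiction lands directly as a coincidence of the $\ell$-th and $(\ell-1)$-th vertices of the \emph{original} chain, not as degeneracy of some reordered chain.
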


\begin{proof}
By contradiction assume that for some $j<\ell$ we have
\[
\quad b(x_{\sigma_{j}} m')=x_{\sigma_{j}} m'/x_{\sigma_{\ell}},
\]
where $m'=b(x_{\sigma_{j-1}}\cdots x_{\sigma_{1}} m)$.
Then by our assumption of regularity $(*)$ we can push $\sigma_{\ell}$ back until it meets $\sigma_j$ as follows:
\begin{eqnarray*}
b(x_{\sigma_{\ell}}\cdots x_{\sigma_{1}}m)
&=&
b(x_{\sigma_{\ell}} b(x_{\sigma_{\ell-1}}\cdots x_{\sigma_{j+1}} b(x_{\sigma_{j}}m')))\\
&=&
b(x_{\sigma_{\ell-1}} b(x_{\sigma_{\ell}}x_{\sigma_{\ell-2}}\cdots x_{\sigma_{j+1}} b(x_{\sigma_{j}}m')))
\\
 &=&
\cdots
\\
&=&
b(x_{\sigma_{\ell-1}} b(x_{\sigma_{\ell-2}}\cdots x_{\sigma_{j+1}} x_{\sigma_{\ell}} b(x_{\sigma_{j}}m')))
\\
&=&
b(x_{\sigma_{\ell-1}}\cdots x_{\sigma_{j+1}} x_{\sigma_{\ell}} (x_{\sigma_{j}}m'/x_{\sigma_{\ell}}))
\\
&=&
b(x_{\sigma_{\ell-1}}\cdots x_{\sigma_{1}}m),
\end{eqnarray*}
which is a contradiction by our assumption that $\ch(m,\alpha,\sigma)$ is nondegenerate.
\end{proof}

As a consequence of Lemma~\ref{primary} we have

\medskip

\begin{Corollary} \label{cor:simplex}
Suppose that $|\set(m)| = p$ and $\ch(m,\set(m),\sigma)$ is nondegenerate. Then $\ch(m,\set(m),\sigma)$ is a simplex of dimension $p$.
\end{Corollary}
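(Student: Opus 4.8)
The plan is to show that the $p+1$ points appearing in the definition of $\ch(m,\set(m),\sigma)$ are affinely independent; since the cell is by construction their convex hull, this immediately gives a $p$-simplex. Write the vertices as $v_0 = m$ and $v_i = b(x_{\sigma_i}\cdots x_{\sigma_1}m)$ for $i=1,\dots,p$, and set $m^{(i)} = x_{\sigma_i}\cdots x_{\sigma_1}m$ (the actual monomial, not the generator $v_i$ it decomposes to). The key combinatorial input is that at each step the monomial $m^{(i)}$ and the generator $v_{i-1}$ it was produced from differ by exactly one variable in the numerator and one in the denominator: namely $m^{(i)} = x_{\sigma_i} v_{i-1}$, so $v_i = b(x_{\sigma_i} v_{i-1})$, and since $\ch$ is nondegenerate, $v_i \ne v_{i-1}$, hence $v_i = x_{\sigma_i} v_{i-1}/x_{\sigma_\ell}$ for some single index $\sigma_\ell$ (this is the standard fact about decomposition functions: dividing a generator's shift $x_t v$ by the unique variable in $(I_{<}: (x_t v/\cdots))$ — in fact because $v_{i-1}$ is a generator, $v_i$ is obtained from $x_{\sigma_i}v_{i-1}$ by removing one variable, and by Lemma \ref{primary} that removed variable $\sigma_\ell$ has $\ell > i$, i.e.\ it is one of the later entries of $\sigma$, or it equals $\sigma_i$ itself giving $v_i = v_{i-1}$, which nondegeneracy excludes).

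With this in hand, I would argue affine independence directly by looking at exponent vectors. For each $i$, the difference $v_i - v_{i-1} \in \ZZ^n$ equals $e_{\sigma_i} - e_{\sigma_{\ell(i)}}$ where $\ell(i) > i$ by Lemma \ref{primary}, and also $\ell(i) \le p$ since the variable removed must have been present, so it came from the original $m$ or from some earlier-introduced $x_{\sigma_j}$ with $j \le i$; Lemma \ref{primary} rules out $j \le i$ with the divided variable being $\sigma_j$ in a way that collapses the cell, and careful bookkeeping shows $\ell(i)$ is one of the indices $\sigma_{i+1},\dots,\sigma_p$ (or $\sigma_i$, excluded). The point is that the edge vector $v_i - v_{i-1}$ has a $+1$ in coordinate $\sigma_i$, and crucially coordinate $\sigma_i$ is not touched ($=0$) in any later edge vector $v_j - v_{j-1}$ with $j > i$: in $v_j - v_{j-1} = e_{\sigma_j} - e_{\sigma_{\ell(j)}}$ the positive index is $\sigma_j \ne \sigma_i$, and the negative index $\sigma_{\ell(j)}$ satisfies $\ell(j) > j > i$, so $\sigma_{\ell(j)} \ne \sigma_i$ as well. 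Hence the matrix whose rows are $v_1 - v_0, v_2-v_0, \dots, v_p - v_0$, after the telescoping $v_i - v_0 = \sum_{j\le i}(v_j - v_{j-1})$, is (up to the change of basis given by this unipotent telescoping) upper triangular in the coordinates $\sigma_1,\dots,\sigma_p$ with $\pm 1$ on the diagonal, so it has full rank $p$. That gives affine independence of $v_0,\dots,v_p$.

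Concretely, the steps in order: (1) record that $m^{(i)} = x_{\sigma_i} v_{i-1}$ and, using nondegeneracy, that $v_i = x_{\sigma_i} v_{i-1}/x_{\sigma_{\ell(i)}}$ for a unique index $\sigma_{\ell(i)}$; (2) invoke Lemma \ref{primary} to conclude $\ell(i) > i$ (and note $\ell(i) \le p$, i.e.\ the removed variable is among the later $\sigma$'s, since removing the original-$m$ variable or an $x_{\sigma_j}$ with $j<i$ leads, via regularity $(*)$, to a collapse contradicting nondegeneracy — this is essentially the computation in the proof of Lemma \ref{primary}); (3) deduce that the edge vectors $v_i - v_{i-1} = e_{\sigma_i} - e_{\sigma_{\ell(i)}}$ have the ``staircase'' property that coordinate $\sigma_i$ is untouched by all later edges; (4) conclude the $p\times p$ matrix of edge vectors has rank $p$, hence $v_0,\dots,v_p$ are affinely independent; (5) since $\ch(m,\set(m),\sigma)$ is by definition their convex hull, it is a $p$-simplex.

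The main obstacle is step (2): pinning down that the single variable removed to pass from $x_{\sigma_i}v_{i-1}$ to $v_i$ is indexed by some $\sigma_{\ell}$ with $i < \ell \le p$, rather than by $\sigma_i$ itself (which nondegeneracy handles) or by some variable not appearing in $\set(m)$ at all. Lemma \ref{primary} directly gives $\ell > i$ once we know the removed index lies in $\{\sigma_1,\dots,\sigma_p\}$; the real content is that $v_i$ really is obtained from $v_{i-1}$ by a ``swap within $\set(m)$'', which is where one must unwind the regularity hypothesis and the definition of $\set$, much as in the telescoping computation already carried out in Lemma \ref{primary}. Everything after that is linear algebra over $\ZZ$ and is routine.
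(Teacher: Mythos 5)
Your overall plan---prove affine independence of the exponent vectors $v_0,\dots,v_p$ by exhibiting a triangular structure in the coordinates $\sigma_1,\dots,\sigma_p$---is exactly the paper's strategy. But your step (2) inverts Lemma \ref{primary}. That lemma asserts that the variable divided out when passing from $x_{\sigma_j}v_{j-1}$ to $v_j=b(x_{\sigma_j}v_{j-1})$ is \emph{never} $x_{\sigma_\ell}$ with $\ell>j$; you claim it \emph{is} some $x_{\sigma_{\ell(j)}}$ with $\ell(j)>j$. Moreover the removed variable need not lie in $\set(m)$ at all. The paper's own Example \ref{exam:generate} kills both claims: for $m=x_1x_4x_5$, $\set(m)=\{2,3\}$, $\sigma=(2,3)$, step 1 gives $b(x_2\cdot x_1x_4x_5)=x_1x_2x_4$, so the removed variable is $x_5$ with $5\notin\set(m)$, and step 2 gives $b(x_3\cdot x_1x_2x_4)=x_1x_3x_4$, removing $x_2=x_{\sigma_1}$, an \emph{earlier} index. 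The edge vectors are $e_2-e_5$ and $e_3-e_2$, so coordinate $\sigma_1=2$ is very much touched by the later edge, and your step (3) (``coordinate $\sigma_i$ is untouched by all later edge vectors'') fails; the matrix you build is not upper triangular.

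The argument is salvageable by transposing the triangularity, which is what the paper does. What Lemma \ref{primary} actually gives is that coordinate $\sigma_\ell$ is untouched at every step $j<\ell$ (it is neither multiplied in nor divided out), and nondegeneracy gives that it increases by exactly $1$ at step $\ell$. Hence $v_{i,\sigma_\ell}-v_{0,\sigma_\ell}=0$ for $i<\ell$ and $=1$ for $i=\ell$, so the matrix $\bigl(v_{i,\sigma_\ell}-v_{0,\sigma_\ell}\bigr)_{1\le i,\ell\le p}$ is \emph{lower} triangular with unit diagonal (whatever happens to coordinate $\sigma_\ell$ at steps $>\ell$ only affects entries below the diagonal, and coordinates outside $\{\sigma_1,\dots,\sigma_p\}$ are simply ignored). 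The paper runs this as a back-substitution: coordinate $\sigma_p$ is constant on $v_0,\dots,v_{p-1}$ and one larger on $v_p$, forcing $\lambda_p=0$, and then one peels off the last vertex and repeats. Note that this version also avoids your implicit assumption that exactly one variable is divided out at each step ($x_{\sigma_i}v_{i-1}/v_i$ need not be a single variable when the generators of $I$ have different degrees); one only needs to track the single coordinate $\sigma_\ell$.
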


\begin{proof}
\label{lem}
Let
$v_0 = (v_{0,1}, v_{0,2},\ldots,v_{0,n})$ be the exponent vector of $m$ and $v_i = (v_{i,1}, v_{i,2},\ldots, v_{i,n})$
be the exponent vector of $b(x_{\sigma_i}\cdots x_{\sigma_1}m)$ for each $i>0$.
We will show that $v_0,v_1,\ldots,v_p$ are affinely independent. Assume that $\lambda_0 v_0+\lambda_1 v_1+\cdots+\lambda_p v_p=0$ and $\lambda_0+\lambda_1+\cdots+\lambda_p=0$. Then by Lemma~\ref{primary} 
 we know that for each $\ell$,
\[
v_{i,\sigma_\ell}=
\begin{cases}
v_{\ell-1,\sigma_\ell}, &\text{if $i< \ell$;}\\
v_{\ell-1,\sigma_\ell}+1, &\text{if $i\geq \ell$.}
\end{cases}
\]

In particular $v_{p,\sigma_p}=v_{\ell,\sigma_p}+1$ for all $\ell<p$.
Therefore, we have 
\[
(v_{p,\sigma_p}-1)(\lambda_0 +\lambda_1+\cdots+\lambda_{p-1})+v_{p,\sigma_p}\lambda_p=0
\]
which implies that $\lambda_p=0$. A similar argument shows that $\lambda_i=0$ for all $i$.
\end{proof}

As we have seen, for each subset $\alpha \subset \set(m)$ of size $q$ we have some permutation $\sigma$ of $\alpha$ such that the simplex $\ch(m,\alpha,\sigma)$ is of dimension $q-1$.






\begin{Definition}
Suppose $m$ is a generator of $I$ and $\alpha \subset \set(m)$.  Let $F$ be a facet of the (possibly degenerate) simplex $\ch(m,\alpha,\sigma)$.  We say that $F$ is an \emph{exterior facet} if it is not a facet of $\ch(m,\alpha,\sigma')$ for some other permutation $\sigma'$ of $\alpha$.  Otherwise we say that $F$ is an \emph{interior facet}.
\end{Definition}


\begin{Corollary}
\label{inter}
Let $\alpha \in \set(m)$, let $\sigma$ be some permutation of $\alpha$, and suppose $F$ is a facet of the simplex $\ch(m, \alpha,\sigma) = {\rm Conv}\{m,m_1,\ldots,m_p\}$.  Then $F$ is exterior if and only if $m_\ell\not\in F$ for some $1<\ell<p$ and $\sigma_{\ell}\in\set(b(x_{\sigma_{\ell+1}} m_{\ell-1}))$.
If $F$ is interior, then there exist exactly two nondegenerate simplices containing $F$: the simplex $\ch(m,\alpha,\sigma)$ and the simplex $\ch(m,\alpha,\sigma')$, where $\sigma'=(\sigma_1,\ldots,\sigma_{\ell-1},\sigma_{\ell+1},\sigma_\ell,\ldots,\sigma_p)$.


\end{Corollary}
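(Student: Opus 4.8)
The plan is to reduce the statement to a single combinatorial move — swapping two consecutive letters of $\sigma$ — and to extract from the regularity hypothesis $(*)$ enough rigidity that this is the only way a facet of $\ch(m,\alpha,\sigma)$ can be shared. I take $\ch(m,\alpha,\sigma)$ to be nondegenerate, as the statement does; a degenerate $\ch(m,\alpha,\tau)$ has dimension $<p$ and so cannot have the $(p-1)$-dimensional face $F$ as a facet, so only nondegenerate simplices are relevant. Write $m_0=m$ and $m_i=b(x_{\sigma_i}\cdots x_{\sigma_1}m)$; by Corollary~\ref{cor:simplex} the $m_i$ are affinely independent, and $F$ is determined by the unique vertex $m_\ell$ it omits.

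\emph{A fingerprint for vertices.} The computation in the proof of Corollary~\ref{cor:simplex} shows that the exponent vector of $m_i$ is that of $m$ with the coordinates $\sigma_1,\dots,\sigma_i$ each raised by $1$; so a vertex is recorded faithfully by the \emph{set} of coordinates in which it exceeds $m$. Together with $(*)$ — which lets one reorder the applications of $b$, using the containment $\set(b(x_t m'))\subseteq\set(m')$ so that at each stage the two variables being transposed still lie in the relevant $\set$ — this gives the "fingerprint'' used throughout: for any $\tau$ with $\ch(m,\alpha,\tau)$ nondegenerate, the vertex set of $\ch(m,\alpha,\tau)$ consists of the monomials attached to the flag $\emptyset\subset\{\tau_1\}\subset\{\tau_1,\tau_2\}\subset\cdots\subset\alpha$, and the monomial attached to a subset $S\subseteq\alpha$ depends only on $S$. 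I also use the identity $b(x_t w)=w\iff t\notin\set(w)$ and the consequence $\set(m_i)\subseteq\set(m_{i-1})$ of regularity.

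\emph{The transposition and the interior case.} Fix $\ell$ with $1\le\ell\le p-1$ and set $\sigma'=(\sigma_1,\dots,\sigma_{\ell-1},\sigma_{\ell+1},\sigma_\ell,\dots,\sigma_p)$. Since $m_{\ell+1}\ne m_\ell$ and $m_\ell\ne m_{\ell-1}$ we get $\sigma_{\ell+1}\in\set(m_\ell)\subseteq\set(m_{\ell-1})$ and $\sigma_\ell\in\set(m_{\ell-1})$, so $(*)$ applies at $m_{\ell-1}$ and a telescoping exactly as in the proof of Lemma~\ref{primary} gives $m'_i=m_i$ for all $i\ne\ell$, with $m'_\ell=b(x_{\sigma_{\ell+1}}m_{\ell-1})$, which differs from $m_\ell$ by the fingerprint. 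Using $b(x_t w)=w\iff t\notin\set(w)$: step $\ell$ of $\sigma'$ never collapses, step $\ell+1$ collapses iff $\sigma_\ell\notin\set(b(x_{\sigma_{\ell+1}}m_{\ell-1}))$, and the remaining steps behave as in $\sigma$. Hence if $\sigma_\ell\in\set(b(x_{\sigma_{\ell+1}}m_{\ell-1}))$ then $\ch(m,\alpha,\sigma')$ is nondegenerate, distinct from $\ch(m,\alpha,\sigma)$, and has $F$ as a facet, so $F$ is interior; otherwise $m'_\ell=m_{\ell+1}$, so $\ch(m,\alpha,\sigma')$ collapses precisely onto $F$ and yields nothing new.

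\emph{Uniqueness, the endpoints, and the obstacle.} It remains to show $F$ lies on no other nondegenerate simplex. Suppose $F$ is a facet of a nondegenerate $\ch(m,\alpha,\tau)=\con\{n_0,\dots,n_p\}$, obtained by omitting $n_j$. Comparing the two vertex sets by number of coordinates exceeding $m$: the multiset on the $F$-side is $\{0,\dots,p\}\setminus\{\ell\}$, forcing $j=\ell$; matching the unique vertex with $i$ such coordinates on the two sides then forces $\{\tau_1,\dots,\tau_i\}=\{\sigma_1,\dots,\sigma_i\}$ for every $i\ne\ell$, hence $\tau_i=\sigma_i$ for $i\le\ell-1$ and for $i\ge\ell+2$, and $\{\tau_\ell,\tau_{\ell+1}\}=\{\sigma_\ell,\sigma_{\ell+1}\}$; thus $\tau\in\{\sigma,\sigma'\}$. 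If instead $F$ omits $m_0=m$ (resp.\ $m_p$), the multiset comparison forces $j=0$ (resp.\ $j=p$) and then $\tau=\sigma$, while no transposition is available, so these facets are exterior. Combining the cases yields the characterization of exterior facets and, when $F$ is interior, exactly the two simplices $\ch(m,\alpha,\sigma)$ and $\ch(m,\alpha,\sigma')$. The main obstacle I anticipate is making the fingerprint paragraph precise — especially showing that reachability of a subset $S\subseteq\alpha$ under $b$ is independent of the order of application, where $(*)$ and $\set(b(x_t m'))\subseteq\set(m')$ must be combined with some care; everything after that is the bookkeeping above.
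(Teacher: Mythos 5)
Your argument is essentially the paper's: the same adjacent transposition $\sigma'$, the same case split on whether $\sigma_\ell\in\set(b(x_{\sigma_{\ell+1}}m_{\ell-1}))$, and a uniqueness step that, like the paper's, ultimately rests on the coordinate description of the vertices coming from Lemma~\ref{primary}; your ``exceedance set'' bookkeeping is just a cleaner packaging of the paper's remark that a nondegenerate simplex with $\sigma''_i=\sigma_\ell$ and $i<\ell$ cannot contain $m_{\ell-1}$. Two small corrections to your write-up: the exponent vector of $m_i$ is \emph{not} that of $m$ with the coordinates $\sigma_1,\dots,\sigma_i$ each raised by one (in Example~\ref{linear_quotients}, $b(x_2\cdot x_1x_4x_5)=x_1x_2x_4$ also drops $x_5$); what is true, and what Lemma~\ref{primary} and Corollary~\ref{cor:simplex} actually give, is that $m_i$ strictly exceeds $m$ exactly in the coordinates $\{\sigma_1,\dots,\sigma_i\}$, and since that is all your matching argument uses, the order-independence of reachability that you flag as the ``main obstacle'' is never needed --- each nondegenerate simplex is compared to $m$ separately. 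Note finally that what you prove (correctly, and in agreement with the paper's own proof) is that the displayed condition characterizes \emph{interior} facets, which is the opposite of the corollary's literal wording.
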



\begin{proof}
Let $\ch(m,\alpha,\sigma)=\langle m,m_1,\ldots,m_p\rangle$ be a nondegenerate simplex which contains $F$ as a facet with $m_\ell\not\in F$ for some $1< \ell<p$. Now we consider the following two cases:

\medskip

\textbf{Case (i).} Suppose $\sigma_{\ell}\in\set(b(x_{\sigma_{\ell+1}} m_{\ell-1}))$.  Then $b(x_{\sigma_{\ell}} x_{\sigma_{\ell+1}} m_{\ell-1}) \neq b(x_{\sigma_{\ell+1}} m_{\ell-1})$ and so  $F$ is also a facet of $\ch(m,\set(m),\sigma')$, where $\sigma'=(\sigma_1,\ldots,\sigma_{\ell-1},\sigma_{\ell+1},\sigma_{\ell},\ldots,\sigma_{p})$. Therefore $F$ is an interior facet.

By contradicton assume that
$\ch(m,\set(m),\sigma'')$
is another nondegenerate simplex containing $F$ as a facet such that $\sigma''_{i}=\sigma_{\ell}$.
If $i<\ell$, then the corresponding simplex does not contain $m_{\ell-1}$.
If $i>\ell$, then $m_{\ell+1}$ does not appear among the vertices, a contradiction.

\medskip

\textbf {Case (ii).} Suppose $\sigma_{\ell}\not\in\set(b(x_{\sigma_{\ell+1}} m_{\ell-1}))$.  In this case we have $b(x_{\sigma_{\ell}} x_{\sigma_{\ell+1}} m_{\ell-1}) = b(x_{\sigma_{\ell+1}} m_{\ell-1})$.
Assume that $\ch(m,\set(m),\sigma')=\langle w_0,w_1,\ldots,w_p\rangle$ is another simplex containing $F$ as a facet and let $w_i\not\in\{m,m_1,\ldots,m_p\}$. We wish to show that $i=0$.  By contradiction we suppose $i\geq 1$ and note that $b(x_{\sigma_\ell} w_{i-1})=w_i$. 
Our assumption that $b(x_{\sigma_{\ell}} x_{\sigma_{\ell+1}} m_{\ell-1}) = b(x_{\sigma_{\ell+1}} m_{\ell-1})$ implies that $\sigma_\ell\in\set(m_i)$ for all $i\leq \ell$ and $\sigma_\ell\not\in\set(m_i)$ for all $i>\ell$. On the other hand, $\sigma_\ell\in\set(w_{i-1})$ which implies that $i<\ell$ and $\sigma_{\ell+1}\in\set(w_i)$. Thus  $w_{i+1}=b(x_{\sigma_{\ell+1}} w_i)$ is not equal to $m_{\ell+1}$ which implies that $w_i,w_{i+1}\not\in F$, a contradiction.

Therefore we have $\ell=0$ so that $F=\langle m_1,\ldots,m_p \rangle$ is the facet of $\ch(m, \alpha,\sigma)$ obtained by removing the vertex $m$.
We conclude that $F$ is not a facet of any other simplex of the form $\ch(m,\alpha,\sigma')=\langle m,m'_1,\ldots,m'_p\rangle$, and hence $F$ is exterior. 
\end{proof}



We next construct the cells that will serve as basis elements of the free modules in our resolution.  We obtain these by gluing together the simplices  ${\rm ch}(m,\alpha,\sigma)$ corresponding to the different choices of the permutation $\sigma$.  For this we define the cell $U(m, \alpha)$ as the union over all permutations $\sigma$ of $\alpha$.

\[ U(m, \alpha) = \bigcup_{\textrm{$\sigma$ a permutation of $\alpha$}} {\rm ch}(m,\alpha,\sigma) .\]

Note that by Lemma \ref{degenerate}, $U(m,\alpha)$ can be written (as a subset of ${\mathbb R}^n$) as the union of \emph{nondegenerate} simplices
$\ch(m,\alpha,\sigma)$.

\subsection{ Orientation of  $\ch(m, \set(m),\sigma)$:}

\medskip

\medskip

\begin{Definition}
For each $p\geq 1$ we fix the permutation $(p,\ldots,1)$. Then for each permutation $\sigma$ of $\{1,\ldots,p\}$ we define
\[\epsilon(\sigma)=\sgn(\sigma,(p,\ldots,1))\] 
where $\sgn(-,-)$ denotes the standard sign function for permutations.
\end{Definition}

\begin{Lemma}\label{orient}
There exists an orientation on the simplices $\ch(m,\set(m),\sigma)$ such if $F$ in an interior facet belonging to both $\ch(m,\set(m),\sigma)$ and $\ch(m,\set(m),\sigma')$, then $F$ has the same induced orientation if and only if the the simplices themselves have opposite orientations.
\end{Lemma}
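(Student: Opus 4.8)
The plan is to define the orientation of each nondegenerate simplex $\ch(m,\set(m),\sigma)$ by combining two pieces of data: the sign $\epsilon(\sigma)$ of the permutation $\sigma$ relative to the reference permutation $(p,\ldots,1)$, and the orientation induced on the underlying geometric simplex by the ordered list of its vertices $(m, b(x_{\sigma_1}m), b(x_{\sigma_2}x_{\sigma_1}m),\ldots, b(x_{\sigma_p}\cdots x_{\sigma_1}m))$. More precisely, I would declare the orientation of $\ch(m,\set(m),\sigma)$ to be $\epsilon(\sigma)$ times the orientation given by this vertex ordering (viewed as an oriented simplex in the standard simplicial-chain sense). The point of the reference permutation is simply to pin down a consistent normalization so the statement is not vacuous; any fixed choice works.

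The key step is then to analyze what happens at an interior facet $F$. By Corollary~\ref{inter}, if $F$ is interior then it is contained in exactly two nondegenerate simplices, $\ch(m,\set(m),\sigma)$ and $\ch(m,\set(m),\sigma')$, where $\sigma'$ is obtained from $\sigma$ by transposing the two entries in positions $\ell$ and $\ell+1$, for the unique index $\ell$ with $m_\ell \notin F$. I would check two things. First, $\epsilon(\sigma') = -\epsilon(\sigma)$, since $\sigma$ and $\sigma'$ differ by a transposition and $\sgn$ is multiplicative. Second — and this is the geometric heart of the matter — the two vertex orderings of the simplices $\ch(m,\set(m),\sigma)$ and $\ch(m,\set(m),\sigma')$ agree in all positions except positions $\ell$ and $\ell+1$, and at those two positions the vertices are swapped as ordered lists but the underlying two points are the same pair. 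This is because the vertex in position $i$ of either simplex is $b(x_{\sigma_i}\cdots x_{\sigma_1}m)$, and for $i<\ell$ this depends only on $\sigma_1,\ldots,\sigma_{\ell-1}$ (unchanged), for $i>\ell+1$ the regularity relation $(*)$ guarantees the accumulated monomial $b(x_{\sigma_i}\cdots x_{\sigma_1}m)$ is symmetric in the order of $\sigma_\ell,\sigma_{\ell+1}$, and for $i\in\{\ell,\ell+1\}$ the unordered pair $\{b(x_{\sigma_\ell}\cdots x_{\sigma_1}m), b(x_{\sigma_{\ell+1}}x_{\sigma_\ell}\cdots x_{\sigma_1}m)\}$ coincides with $\{b(x_{\sigma'_\ell}\cdots x_{\sigma'_1}m), b(x_{\sigma'_{\ell+1}}x_{\sigma'_\ell}\cdots x_{\sigma'_1}m)\}$, again by $(*)$. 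Thus as oriented geometric simplices they differ by a transposition of two vertices, hence by a sign $-1$; equivalently the induced orientations they put on the common facet $F$ differ by a sign. Combining: the total orientations of $\ch(m,\set(m),\sigma)$ and $\ch(m,\set(m),\sigma')$ (i.e. $\epsilon$ times the geometric orientation) are related by $(-1)\cdot(-1) = +1$ overall in one comparison and $-1$ in the other, and a careful bookkeeping of which sign attaches to the facet versus the simplex yields exactly the stated ``same induced orientation on $F$ iff opposite orientations on the simplices'' condition.

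For the bookkeeping I would use the standard formula: if an $p$-simplex is oriented as $[w_0,\ldots,w_p]$, the induced orientation on the facet obtained by deleting $w_i$ is $(-1)^i[w_0,\ldots,\widehat{w_i},\ldots,w_p]$. Since in our two simplices the deleted vertex is $m_\ell$, which sits in position $\ell$ (counting $m$ as position $0$) in $\ch(m,\set(m),\sigma)$ and — after the transposition — in position $\ell$ or $\ell+1$ in $\ch(m,\set(m),\sigma')$ depending on which of the two swapped slots held $m_\ell$; one finds the two $(-1)^{\text{position}}$ signs differ by exactly $-1$, and the surviving vertex lists, once reindexed, are identical. Feeding in $\epsilon(\sigma')=-\epsilon(\sigma)$ gives the claimed equivalence.

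The main obstacle I anticipate is the last paragraph's sign accounting: one must be scrupulous about whether $m_\ell$ lands in position $\ell$ or $\ell+1$ of the vertex list after the transposition, and about the fact that the ``$\epsilon$'' normalization interacts with the geometric orientation multiplicatively rather than additively. There is also a subtlety that the list of geometric vertices could a priori repeat even for distinct $\sigma$ — but nondegeneracy (hypothesis of Corollary~\ref{cor:simplex}) rules this out, so the oriented simplex is genuinely determined by its ordered vertex list. Once these are handled, the proof is essentially a transposition-parity computation powered entirely by the regularity identity $(*)$ from Definition~\ref{def:regular}.
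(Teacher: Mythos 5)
Your overall strategy --- orient each nondegenerate simplex $\ch(m,\set(m),\sigma)$ explicitly as $\epsilon(\sigma)$ times the orientation of its ordered vertex list, then verify the facet condition locally using Corollary~\ref{inter} --- is sound, and it is genuinely different from (and more explicit than) the paper's own argument, which only propagates orientations along the connected adjacency graph of the simplices via chains of transpositions and asserts consistency. However, your key geometric step is wrong as written. You claim that $\ch(m,\set(m),\sigma)$ and $\ch(m,\set(m),\sigma')$ have the same vertex set, with the vertices in positions $\ell$ and $\ell+1$ merely transposed, ``the same pair of points.'' This cannot happen: two distinct $p$-simplices sharing a common $(p-1)$-dimensional facet necessarily differ in exactly one vertex, the one opposite the shared facet. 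Concretely, writing $m_i=b(x_{\sigma_i}\cdots x_{\sigma_1}m)$ and $m'_i$ for the vertices of $\ch(m,\set(m),\sigma')$, regularity $(*)$ gives $m'_i=m_i$ for all $i\neq \ell$ (in particular $m'_{\ell+1}=b(x_{\sigma_\ell}b(x_{\sigma_{\ell+1}}m_{\ell-1}))=m_{\ell+1}$), while $m'_\ell=b(x_{\sigma_{\ell+1}}m_{\ell-1})\neq m_\ell=b(x_{\sigma_\ell}m_{\ell-1})$. So the two simplices are distinct and glued along the facet obtained by deleting position $\ell$; there is no transposition of vertices relating them as oriented simplices, and your later bookkeeping of ``where $m_\ell$ lands after the transposition'' is tracking a vertex that does not occur in the second simplex at all.

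The repair is short and in fact simplifies your argument. Since the deleted vertex occupies position $\ell$ in both ordered vertex lists and the surviving lists coincide entry by entry, the bare vertex-order orientations induce on $F$ the \emph{same} oriented list with the \emph{same} position sign (whether one uses $(-1)^{\ell}$ or the paper's $(-1)^{p-\ell}$ convention). The entire discrepancy therefore comes from the prefactor $\epsilon(\sigma')=-\epsilon(\sigma)$, so the two weighted induced orientations on $F$ are opposite --- exactly the cancellation needed in Lemma~\ref{Lemma_diff}. Note that your tentative tally ``$(-1)\cdot(-1)=+1$'' would give the wrong, non-cancelling answer: only one factor of $-1$ is present, and it comes from $\epsilon$, not from the geometry.
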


\begin{proof}
By Lemma~\ref{inter} each interior facet $F$ belongs to exactly two simplices $\ch(m,\set(m),\sigma)$ and 
$\ch(m,\set(m),\sigma')$. Therefore the orientation of each interior facet $F$ can be determined uniquely by the simplices containing $F$.

For each two nondegenerate simploices $G=\ch(m,\set(m),\sigma)$ and $G'=\ch(m,\set(m),\sigma')$ there exists a chain $G_0=G,G_1,\ldots,G_r=G'$ such that $G_{i+1}$ is obtained from $G_i$ by swapping two suitable indices. 

These two facts show that the orientations on the simplices can be determined uniquely.
\end{proof}

\medskip

\begin{Notation}
 Assume that $F$ is a facet of $\ch(m,\set(m),\sigma)=\langle m= m_0, m_1, \ldots,m_p\rangle$. 
 When $F$ is an exterior facet, or when the simplex containing $F$ is clear, we denote $o(F)$ to denote the the orientation of $F$ that is determined by $\sgn(\sigma)$ and the missing vertex in $F$.
\end{Notation}

The following is our main technical lemma regarding the differential maps.

\begin{Lemma} \label{Lemma_diff}
The topological differentials of each cell $U(m,\alpha)$ with subset $\alpha=\{{j_1},\ldots,{j_p}\}$ of $\set(m)$ are
\[
d(U(m,\alpha))=\sum_i (-1)^i U(m,\alpha\backslash j_i) - \sum_i (-1)^i U(b(x_{j_i}m),\alpha\backslash j_i)\ .
\]
\end{Lemma}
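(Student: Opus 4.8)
The plan is to compute the cellular (topological) boundary of the cell $U(m,\alpha)$ directly from its description as a union of nondegenerate simplices $\ch(m,\alpha,\sigma)$ and to show that, after cancellation of interior facets, what survives is exactly the claimed alternating sum. First I would fix $\alpha=\{j_1<\cdots<j_p\}$ and recall that, by Lemma~\ref{orient}, there is a coherent orientation on the collection $\{\ch(m,\alpha,\sigma)\}$ under which every interior facet receives opposite induced orientations from the two simplices that share it. Hence in the chain $\sum_\sigma \epsilon(\sigma)\,[\ch(m,\alpha,\sigma)]$ every interior facet cancels in $d(U(m,\alpha))$, and the topological boundary is supported entirely on the \emph{exterior} facets. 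So the computation reduces to identifying the exterior facets and grouping them.

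Next I would use Corollary~\ref{inter}: a facet $F$ of $\ch(m,\alpha,\sigma)=\langle m=m_0,m_1,\dots,m_p\rangle$ is exterior precisely when either $F$ omits the initial vertex $m$ (Case (ii), $\ell=0$), or $F$ omits a vertex $m_\ell$ with $1<\ell<p$ and $\sigma_\ell\notin\set(b(x_{\sigma_{\ell+1}}m_{\ell-1}))$, or (the boundary cases $\ell=1,p$, which I should treat separately but run on the same principle) $F$ omits $m_1$ or $m_p$. I would then observe that the exterior facets split into two families. The facets omitting $m=m_0$: for a fixed permutation $\sigma$, this facet is $\langle b(x_{\sigma_1}m),\dots,b(x_{\sigma_p}\cdots x_{\sigma_1}m)\rangle$; as $\sigma$ ranges over all permutations whose \emph{first} letter is some fixed $j_i$, these facets fit together (again using Lemma~\ref{degenerate} to absorb degenerate simplices) to form exactly the cell $U(b(x_{j_i}m),\alpha\setminus j_i)$, since conditioning on the first letter $j_i$ amounts to first moving to the generator $b(x_{j_i}m)$ and then letting the remaining $p-1$ letters permute freely over $\alpha\setminus j_i$. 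This uses regularity in the form $(*)$ to guarantee that $\set(b(x_{j_i}m))\supseteq \alpha\setminus j_i$ so that the smaller cell is well-defined, and matches the sign bookkeeping $\sum_i(-1)^i\,\epsilon(\cdots)$ against the convention fixing $(p,\dots,1)$. The facets omitting some $m_\ell$ with $1\le\ell\le p-1$ and $\sigma_\ell\notin\set(b(x_{\sigma_{\ell+1}}m_{\ell-1}))$: here the degeneracy condition says that dropping the index $\sigma_\ell$ from the sequence changes nothing about the remaining reachable generators, i.e. $\ch(m,\alpha,\sigma)$'s facet coincides with a simplex of the form $\ch(m,\alpha\setminus\sigma_\ell,\tau)$ where $\tau$ is $\sigma$ with $\sigma_\ell$ deleted; collecting over all $\sigma$ with $\sigma_\ell=j_i$ assembles $U(m,\alpha\setminus j_i)$. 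Again I would check the signs: deleting the letter in position $\ell$ contributes $(-1)^\ell$ from the simplicial boundary, which after re-indexing relative to the base permutation becomes $(-1)^i$ attached to $U(m,\alpha\setminus j_i)$.

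Putting the two families together yields $d(U(m,\alpha))=\sum_i(-1)^i U(m,\alpha\setminus j_i)-\sum_i(-1)^i U(b(x_{j_i}m),\alpha\setminus j_i)$, with the minus sign on the second family coming from the fact that the facet omitting $m_0=m$ is the ``first'' face and carries sign $(-1)^0$ relative to the standard simplicial boundary convention, whereas our indexing of the $U(b(x_{j_i}m),\cdot)$ terms reverses this once the $\epsilon(\sigma)$ normalization is unwound. The main obstacle I expect is the sign accounting: one has to be careful that the global orientations from Lemma~\ref{orient}, the local simplicial boundary signs $(-1)^\ell$, and the re-indexing from the position of a letter in $\sigma$ to its rank $i$ in $\alpha$ all compose correctly and uniformly across the two families and the boundary cases $\ell\in\{1,p\}$; I would handle this by first doing $p=1$ and $p=2$ explicitly (where Definition~\ref{def:regular} even pins down $b(x_tb(x_sm))$), then arguing the general case by the coherence established in Lemma~\ref{orient} together with the chain-of-transpositions argument there, so that it suffices to verify the sign compatibility across a single transposition of adjacent letters of $\sigma$. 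A secondary, more routine point is confirming that the unions described above are genuinely the cells $U(\cdot,\cdot)$ as \emph{sets} in $\mathbb{R}^n$ and not merely abstract matches of vertex sets — this follows from Corollary~\ref{cor:simplex} (each nondegenerate $\ch$ is a simplex of the expected dimension) and Lemma~\ref{degenerate} (degenerate simplices are faces of nondegenerate ones), so no new ideas are needed there.
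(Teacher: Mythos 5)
Your proposal follows essentially the same route as the paper's proof: decompose $U(m,\alpha)$ into coherently oriented nondegenerate simplices, cancel the interior facets using Lemma~\ref{orient} and Corollary~\ref{inter}, and group the surviving exterior facets into those omitting the initial vertex $m_0$ (assembling the terms $U(b(x_{j_i}m),\alpha\setminus j_i)$) and those omitting a later vertex under the degeneracy condition (assembling the terms $U(m,\alpha\setminus j_i)$), exactly as in the paper's Cases 1, 2, 3.1, 3.2. One small slip: regularity gives $\set(b(x_t m))\subseteq\set(m)$, not the inclusion $\set(b(x_{j_i}m))\supseteq\alpha\setminus j_i$ you invoke (Example~\ref{exam:generate} is a counterexample) — but the paper elides the same well-definedness point, so this does not affect the comparison.
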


\begin{proof}
For the simplex $\langle m_0,m_1,\ldots,m_p \rangle$, the boundary map is given by

\[
d(\langle m_0,m_1,\ldots,m_p \rangle)= \sum_{i=0}^p (-1)^{p-i}\langle m_0,\ldots,{\widehat{m_i}},\ldots,m_p \rangle
\]

\medskip

For $\alpha \subset \set(m)$, the cell $U(m,\alpha)$ for $\alpha\subseteq \set(m)$ is the union of all nondegenerate simplices $\ch(m,\alpha,\sigma)$, and is oriented as
\[
U(m,\alpha)=\sum_{F=\ch(m,\alpha,\sigma)\atop{{\rm nondegenerate}}}
o(F) F,
\]
where $o(F)$ denotes the unique orientation given by Lemma~\ref{orient}. So the topological differentials of the cell $\alpha\subseteq\set(m)$ are given by

\[
d(U(m,\alpha))= \sum_{F=\ch(m,\alpha,\sigma)\atop{{\rm nondegenerate}}}
o(F) d(F).
\]

Now corresponding to the fixed simplex
$\ch(m,\alpha,\sigma)=\langle m_0,m_1,\ldots,m_p \rangle$ we have the following terms in the differential of $U(m,\alpha)$:

\medskip

Case $1.$ $i=0$.
The term corresponding to $i=0$ is
$(-1)^p \sgn(\sigma) \langle m_1,\ldots,m_p \rangle$. 
\\
Let $\sigma'=({\sigma_2},\ldots,{\sigma_p})$ and $\pi$ be the permutation such that $\pi\sigma'=(p,\ldots,\widehat{{\sigma_1}},\ldots,1)$.
Then
\[
({\sigma_1},p)({\sigma_1},{p-1})\cdots ({\sigma_1},{\sigma_1+1})\pi \sigma'= (p,\ldots,1)
\]
shows that $\sgn(\sigma)=\sgn(\sigma') (-1)^{p-\sigma_1}$. Thus the corresponding term is
\[
(-1)^{\sigma_1} \sgn(\sigma')\ch(b(x_{j_{\sigma_1}} m),\alpha\backslash j_{\sigma_1},\sigma').
\]
This face can not be obtained from the differential of
any other simplex in $U(m,\alpha)$ by Lemma~\ref{inter}.

\medskip

Case $2$. $i=p$. In this case the term corresponding to $i=p$ is $\sgn(\sigma) \langle m_0,\ldots,m_{p-1}\rangle$.
\\
Then again this face can not be obtained from the differential of
any other simplex in $U(m,\alpha)$.
As in  Case $1$ we have $\sgn(\sigma)=\sgn(\sigma')(-1)^{\sigma_p+1}$, where $\sigma'=(\sigma_1,\ldots,\sigma_{p-1})$. Thus this term can be written as
\[
(-1)^{\sigma_p+1}\sgn(\sigma') \ch(m,\alpha\backslash j_{\sigma_p},\sigma').
\]

\medskip

Case $3$. $0<i<p$. The corresponding term is
\[
(-1)^{p-i}\sgn(\sigma)\langle m_0,\ldots,{\widehat{m_i}},\ldots,m_p \rangle.
\]

Now we have two subcases:

\medskip

Case $3.1$. First suppose $\sigma_i\in\set(b(x_{j_{\sigma_{i+1}}}m_{{i-1}}))$. Thus $b(x_{\sigma_i}b(x_{j_{\sigma_{i+1}}}m_{{i-1}}))\neq b(x_{j_{\sigma_{i+1}}}m_{{i-1}})$.
We set $\sigma'=(\sigma_1,\ldots,\sigma_{i-1},
\sigma_{i+1},\sigma_{i},\ldots,\sigma_p)$. Our condition guarantees that $\ch(m,\alpha,\sigma')$ is nondegenerate.
Note that $\ch(m,\alpha,\sigma')=\langle m_0,\ldots,m_{i-1},m'_{i},m_{i+1},\ldots,m_p\rangle$ for some $m'_{i}$. Now by removing the $i^{\rm th}$ vertex of $\ch(m,\alpha,\sigma')$ we get
\[
(-1)^{p-i}\sgn(\sigma') \langle m_0,\ldots,
{\widehat{m'_i}},\ldots,m_p\rangle.
\]
Since $\sgn(\sigma')=-\sgn(\sigma)$, when we take the sum over all possible permutations of $\alpha$ given nondegenerate cells, these two terms  will be canceled.

\medskip

Case $3.2$. Next suppose $\sigma_i\not\in\set(b(x_{j_{\sigma_{i+1}}}m_{{i-1}}))$. Then $b(x_{\sigma_i}b(x_{j_{\sigma_{i+1}}}m_{{i-1}}))= b(x_{j_{\sigma_{i+1}}}m_{{i-1}})$ and the corresponding term is 
\[
 (-1)^{p-i}\sgn(\sigma)\ch(m,\alpha\backslash j_{\sigma_i},\sigma'),
\]
where $\sigma'=(\sigma_1,\ldots,\widehat{\sigma_i},\ldots,\sigma_p)$. Since $(-1)^{p-i-\sigma_i+1}\sgn(\sigma')=\sgn(\sigma)$, we can write this term as
\[
 (-1)^{\sigma_i+1}\ch(m,\alpha\backslash j_{\sigma_i},\sigma').
\]

\medskip

Now by considering $\sum d(o(F)F)$ over all non-degenerate cells $F$ we find that
the remaining terms are the sum of
\\

\noindent
$
(1) \sum_i (-1)^{i} o(\sigma') \ch(b(x_{j_i} m),\alpha\backslash j_i,\sigma'),
$
\\

\noindent
$
(2) \sum_i (-1)^{i+1} o(\sigma') \ch(m,\alpha\backslash j_i,\sigma'),
\  {\rm where}\
j_i\in\set(b(x_{j_{i+1}}b(x_{j_{i-1}}\cdots x_{j_1}m)))$,
\\

\noindent
$
(3.2) \sum_i (-1)^{i+1} o(\sigma') \ch(m,\alpha\backslash j_i,\sigma'),
\  {\rm where}\
j_i\not\in\set(b(x_{j_{i+1}}b(x_{j_{i-1}}\cdots x_{j_1}m)))$
\\
\medskip

\noindent
over all $\sigma'$ where the corresponding facet is nondegenerate.
Then the first sum can be written as
$\sum_i (-1)^i U(b(x_{j_i}m),\alpha\backslash j_i)$
and the sums coming from $(2)$ and $(3.2)$ can be written as
$\sum_i (-1)^{i+1} U(m,\alpha\backslash j_i)$. Therefore
\[
d(U(m,\alpha))=\sum_{i} (-1)^i U(b(x_{j_i}m),\alpha\backslash j_i)+\sum_i (-1)^{i+1} U(m,\alpha\backslash j_i),
\]

\noindent
as desired.  This completes the proof.

\end{proof}

With these preliminaries in place we can establish the main result of this section.

\begin{Theorem} \label{Theorem_cone}
Suppose $I$ has linear quotients with respect to some ordering $(m_1, \dots, m_k)$ of the generators, and furthermore suppose that $I$ has a regular decomposition function.  Then the minimal resolution of $I$ obtained as an iterated mapping cone is cellular and supported on a regular $CW$-complex.\end{Theorem}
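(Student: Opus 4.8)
The plan is to build the supporting space explicitly as
\[
X \;:=\; \bigcup_{m\in G(I)}\ \bigcup_{\alpha\subseteq \set(m)} U(m,\alpha)\ \subset\ \RR^n,
\]
with the sets $U(m,\alpha)$ as its closed cells, to equip it with the monomial labeling in which $U(m,\alpha)$ carries the least common multiple of the labels of its vertices, and then to identify the resulting augmented labeled cellular chain complex $\mathcal{F}_X$ term by term with the minimal free resolution $F$ of $R/I$ from Theorem~\ref{H}. Once that identification is in place, exactness of $\mathcal{F}_X$ (hence that $X$ supports a resolution) is inherited from $F$ at no cost, so the only genuinely geometric work is to verify that $X$ really is a regular CW-complex with the $U(m,\alpha)$ as cells.

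For the CW-structure I would argue as follows. By (the proof of) Corollary~\ref{cor:simplex}, which only needs that some ordering of $\alpha$ is nondegenerate, if $|\alpha|=q$ then every nondegenerate $\ch(m,\alpha,\sigma)$ is a $q$-simplex with $m$ among its vertices, so $U(m,\alpha)$ is $q$-dimensional, and by Lemma~\ref{degenerate} it is the union of these $q$-simplices. Since $m$ is a vertex of each, $\ch(m,\alpha,\sigma)=m\ast G_\sigma$ with $G_\sigma$ the facet opposite $m$, and sorting the nondegenerate orderings by their first entry $j\in\alpha$ yields the cone description
\[
U(m,\alpha)\;=\;m\ast \bigcup_{j\in\alpha} U\!\big(b(x_j m),\ \alpha\setminus j\big).
\]
Each $G_\sigma$ is an exterior facet (no nondegenerate $\ch(m,\alpha,\sigma')$ can omit $m$), so every ray from $m$ meets $U(m,\alpha)$ in a single segment; thus the cone map is a homeomorphism and $U(m,\alpha)$ is an honest cone on $\bigcup_{j\in\alpha}U(b(x_jm),\alpha\setminus j)$. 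Inducting on $q$, with the $0$-cells $U(m,\emptyset)=\{m\}$ as the trivial base case, the cells $U(b(x_jm),\alpha\setminus j)$ are closed $(q-1)$-balls; using Corollary~\ref{inter} (each interior facet lies in exactly two constituent simplices) together with the differential of Lemma~\ref{Lemma_diff} one checks that their union is again a $(q-1)$-ball, so $U(m,\alpha)$ is a closed $q$-ball, with boundary sphere equal---again by Corollary~\ref{inter} and Lemma~\ref{Lemma_diff}---to the union of the cells $U(m,\alpha\setminus j_i)$ and $U(b(x_{j_i}m),\alpha\setminus j_i)$. Finally the cells are pairwise distinct: since $b(x_t w)$ has smaller index than $w$ whenever $t\in\set(w)$, the generator $m$ has the largest index among the vertex labels of $U(m,\alpha)$, and then $\alpha$ is recovered from $m$ together with the label $m\prod_{j\in\alpha}x_j$; hence $X$ is a regular CW-complex with the $U(m,\alpha)$ as its cells.

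It remains to compute the labeled differential. By the exponent computation in the proof of Corollary~\ref{cor:simplex}, the vertex $b(x_{\sigma_\ell}\cdots x_{\sigma_1}m)$ differs from $m$ only in that the exponent of $x_{\sigma_\ell}$ is raised by one, while every vertex divides $x_{\sigma_1}\cdots x_{\sigma_\ell}m$; hence the least common multiple of the vertices of $U(m,\alpha)$ is $m\prod_{j\in\alpha}x_j$, which is precisely the degree of the basis symbol $(m;\alpha)$ of Lemma~\ref{sets}. Feeding these labels into the topological differential of Lemma~\ref{Lemma_diff}, the coefficient on $U(m,\alpha\setminus j_i)$ becomes $x_{j_i}$ and that on $U(b(x_{j_i}m),\alpha\setminus j_i)$ becomes $x_{j_i}m/b(x_{j_i}m)$ (a monomial, since $b(x_{j_i}m)$ divides $x_{j_i}m$), so
\[
d\big(U(m,\alpha)\big)\;=\;\sum_i (-1)^i x_{j_i}\,U(m,\alpha\setminus j_i)\;-\;\sum_i (-1)^i \frac{x_{j_i}m}{b(x_{j_i}m)}\,U\!\big(b(x_{j_i}m),\alpha\setminus j_i\big),
\]
together with $d(U(m,\emptyset))=m$. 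Under $U(m,\alpha)\leftrightarrow(m;\alpha)$ this is verbatim the differential of Theorem~\ref{H}, so $\mathcal{F}_X\cong F$; being exact, $\mathcal{F}_X$ realizes $X$ as a cellular resolution of $R/I$, and by the previous paragraph it is the minimal one and $X$ is regular.

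The step I expect to be the main obstacle is the ball property of the cells, i.e.\ that the base $\bigcup_{j\in\alpha}U(b(x_jm),\alpha\setminus j)$ of the cone is a $(q-1)$-ball rather than merely a contractible pseudomanifold with boundary (contractibility is immediate from the cone structure but---as the dunce hat shows---does not suffice). The facet pairing of Corollary~\ref{inter} and the connectedness of the swap-graph on permutations from Lemma~\ref{orient} fix the local combinatorics; the cleanest way to globalize is to list the simplices $\ch(m,\alpha,\sigma)$ in a shelling order refining that adjacency, so that $U(m,\alpha)$, being a shellable pure simplicial complex with nonempty boundary, is a ball. Producing such a shelling order explicitly, and verifying its compatibility with the cone decomposition used in the induction, is the delicate point; everything downstream of the ball property is formal bookkeeping against the already-established lemmas, and the identification $\mathcal{F}_X\cong F$ in particular spares us any separate check that the subcomplexes $X_{\preceq b}$ are acyclic.
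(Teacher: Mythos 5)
Your proposal follows essentially the same route as the paper's proof: take the cells $U(m,\alpha)$, decorate the topological differential of Lemma~\ref{Lemma_diff} with monomial coefficients to match the Herzog--Takayama differential of Theorem~\ref{H}, and establish regularity via shellability of the union of simplices $\ch(m,\alpha,\sigma)$ using the swap adjacency and Corollary~\ref{inter}. The paper is in fact terser at exactly the point you flag as delicate---it asserts shellability directly from the swap-chain connectivity and cites a proposition of Danaraj--Klee to conclude the cells are balls---so your explicit attention to producing the shelling order is a more careful rendering of the same step, not a different approach.
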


\begin{proof}

Adding the monomial coefficients to the differential map from Lemma \ref{Lemma_diff} we obtain
\[
d(U(m,\alpha))=\sum_{i} (-1)^i (\frac{x_{j_i}m}{b(x_{j_i}m)})U(b(x_{j_i}m),\alpha\backslash j_i)-\sum_i (-1)^{i} x_{j_i} U(m,\alpha\backslash j_i).
\]
This is precisely the minimal free resolution of $I$ described in Theorem~\ref{H}.  Therefore the complex constructed as the union of the cells $U(m,\alpha)$ supports the minimal free resolution of $I$, as desired.  Moreover this resolution in the closed form looks like the Eliahou-Kervaire resolution.

\medskip

Note that for any two simplices $G=\ch(m,\set(m),\sigma)$ and $G'=\ch(m,\set(m),\sigma')$ there exists the chain $G_0=G,G_1,\ldots,G_r=G'$ where $G_{i+1}$ can be obtained from $G_i$ by swapping two suitable indices. This implies that $U(m,\set(m))$ is a shellable for all $m$.
Hence the constructed resolution is regular, by Lemma~\ref{inter} and \cite[Proposition~1.2]{DK}.
\end{proof}

\begin{figure}
\begin{center}
  \includegraphics[scale = .85]{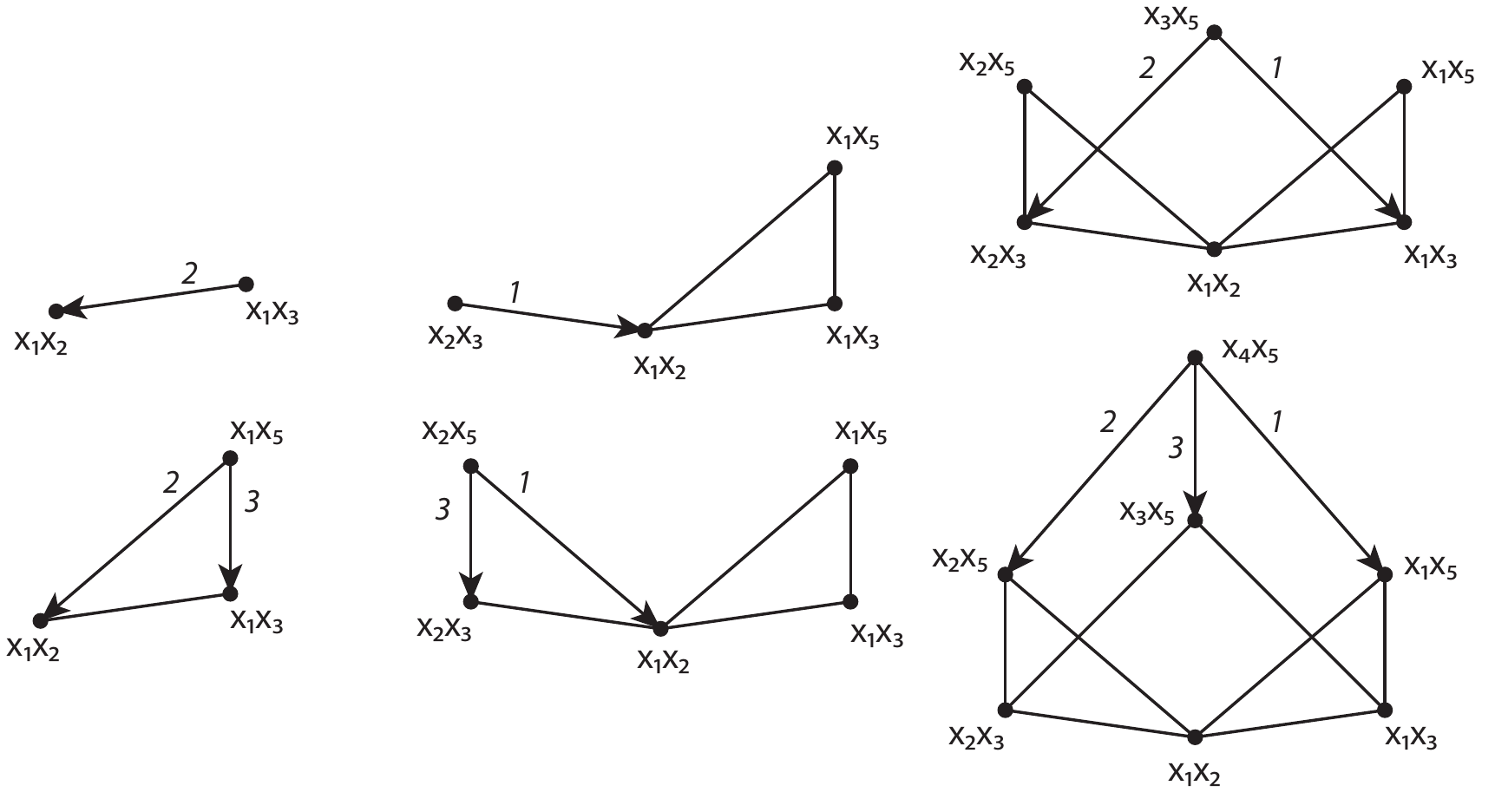}
\label{EKresolution}
    \caption{The resolution of $I = \langle x_1x_2, x_1x_3,x_1x_5, x_2x_3, x_2x_5, x_3x_5, x_4x_5 \rangle$ built from an iterated mapping cone.  At each step we have labeled the newly added edges with the elements of $\set(m_j)$.}

\end{center}
\end{figure}

\begin{Remark}

We note that in the proofs of Theorem \ref{Theorem_cone} and the related lemmas, the only property of the decomposition function that we use is its regularity, and not its definition in terms of assigning a particular generator to a monomial.  Hence we obtain similar results for any decomposition type function that satisfies the regularity property.  We return to this point in Section \ref{RegRes} where we vary the decomposition function to obtain combinatorially distinct cellular resolutions.

\end{Remark}

\section{Other cellular realizations of the mapping cone}\label{MoreRes}

In each step of the mapping cone construction we need to choose the homomorphism of complexes $\psi: {\mathcal G} \rightarrow {\mathcal F}$ that lifts the map of $R$-modules $R/(I_{j-1}:f_j) \rightarrow R/I_{j-1}$.   In the context of the generalized EK resolution described above, this choice was encoded in the definition of the decomposition function $\psi$. We will see that varying the decomposition function leads to combinatorially distinct geometric complexes supporting cellular resolutions, recovering the results from the papers mentioned above.  We view the cellular mapping cone construction as a means of unifying the various constructions of cellular resolutions from the literature.

\subsection{The complex of boxes (homomorphism) resolution} \label{HomRes}

In this section we show how a different choice of decomposition function in the mapping cone construction recovers the cellular resolutions of \cite{Sine}, of \cite{CN} and \cite{NR} (where they are called `complex of boxes' resolutions), and of \cite{DE} (where they are constructed as `homomorphism complex' resolutions).

\medskip

In this context we restrict our attention to \emph{cointerval} ideals, a class of hypergraph edge ideals introduced in \cite{DE} and \cite{MKM} that generalize squarefree strongly stable ideals. Recall that a (regular) \emph{$d$--graph} $H$ on vertex set $[n]$ is a collection of subsets of $[n] = \{1,2,\dots,\}$, each of cardinality $d$.  A $d$-graph $H$ naturally gives rise to a (square-free) monomial ideal $I_H$ by taking generators to be the edges of $H$.  To describe the class of cointerval graphs we need the following notion.

\begin{Definition}
Let $H$ be a $d$--graph and let $v \in V(H) \subseteq \mathbb{Z}$ be some vertex. Then the \emph{$v$--layer} of $H$ is a $(d-1)$--graph on $V\setminus v$ with edge set
\[ \{ v_1v_2\cdots v_{d-1} \mid vv_1v_2\cdots v_{d-1} \in E(H)\textrm{ and }v<v_1,v_2,\ldots ,v_{d-1} \}. \]
\end{Definition}

\begin{Definition}\label{def:intHyp}
The class of \emph{cointerval} $d$--graphs is defined recursively as follows.

Any $1$--graph is cointerval. For $d>1$, a finite regular
$d$--graph $H$ with vertex set $V(H)\subseteq \mathbb{Z}$ is \emph{cointerval}
if
\begin{itemize}
\item[(1)] for every $i \in V(H)$ the $i$--layer of $H$ is cointerval;
\item[(2)] for every pair $i<j$ of vertices, the $j$--layer of $H$ is a subgraph of the $i$--layer of $H$.
\end{itemize}
\end{Definition}

When $d = 2$ the class of cointerval graphs can be seen to coincide with the well-studied \emph{complements of interval graphs} of structural graph theory (hence the name).  One can see that cointerval $d$--graphs generalize the class of (pure) \emph{shifted} simplicial complexes.  Given a cointerval $d$-graph $H$, we will often refer to the associated edge ideal $I_H$ as a \emph{cointerval ideal}.

\medskip

In \cite{MKM} the authors work with a class of ideals they call \emph{generalized Ferrers ideals} which can be seen to coincide with the class of cointerval ideals.   We recall the equivalent definition here.

\begin{Lemma}\label{cointerval}
A squarefree monomial ideal $I$ (generated by monomials of degree $d$) is cointerval if and only if for any monomial $m = x_{i_1}x_{i_2} 
\cdots x_{i_d} \in I$ we also have
\[x_{j_1}x_{j_2} \cdots x_{j_t}x_{i_{t+1}} \cdots x_{i_d} \in I, \] 
\noindent
where $(i_1, i_2, \dots, i_d)$ and $(j_1, j_2, \dots, j_d)$ are such that $j_1 \leq i_1, j_2 \leq i_2, \dots, j_t \leq i_t$ for some $t \leq d$. 

\end{Lemma}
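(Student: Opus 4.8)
The plan is to prove both implications of the equivalence by reducing everything to Definition~\ref{def:intHyp} via an induction on the degree $d$, peeling off the smallest variable of each generator. Throughout I will identify a squarefree generator $m = x_{i_1}\cdots x_{i_d}$ (with $i_1 < i_2 < \cdots < i_d$) with the edge $\{i_1,\ldots,i_d\}$ of a $d$-graph $H$, so that $I = I_H$, and I will freely use the notion of the $v$-layer $H_v$ from the Definition preceding the statement. The key bookkeeping observation is that the generators of $I_H$ whose smallest variable is $x_v$ correspond exactly to the edges of the $(d-1)$-graph $H_v$, shifted up by appending $v$; so the edge ideal $I_{H_v}$ (a squarefree ideal of degree $d-1$ on the variables $x_{v+1},\ldots,x_n$) records precisely the ``$v$-layer'' of the generating set of $I_H$. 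The inductive hypothesis will be that the Lemma holds for squarefree ideals generated in degree $d-1$.

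First I would establish the ``push-down'' closure condition stated in the Lemma in a normalized form: it suffices to check it for a \emph{single} step $j_s \le i_s$ with $j_s \notin \{i_1,\ldots,i_d\}$ and $j_s \ge i_{s-1}$ (i.e.\ decreasing one coordinate at a time to an unused value while keeping the tuple increasing), since an arbitrary push-down $(i_1,\ldots,i_d) \rightsquigarrow (j_1,\ldots,j_t,i_{t+1},\ldots,i_d)$ factors as a sequence of such elementary steps (reorder and do the coordinates from the largest index downward; collisions with existing $i_\ell$ can be absorbed into the increasing-tuple condition). This reduction is routine but should be stated, because it lets me match the condition cleanly against the recursive clauses (1) and (2).

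Then I would prove the two directions. For ($\Rightarrow$): assume $H$ is cointerval. Take a generator $m=x_{i_1}\cdots x_{i_d}$ and an elementary push-down. If the step lowers a coordinate $i_s$ with $s \ge 2$, then the smallest variable $i_1$ is untouched, and the claim is exactly that the corresponding edge of the $i_1$-layer $H_{i_1}$ can be pushed down — which holds by the inductive hypothesis applied to $I_{H_{i_1}}$, using clause (1) that $H_{i_1}$ is cointerval. If instead the step lowers $i_1$ to some $j_1 < i_1$ (with $j_1$ unused), then I must show $x_{j_1}x_{i_2}\cdots x_{i_d}\in I_H$; the edge $\{i_2,\ldots,i_d\}$ lies in $H_{i_1}$, and by clause (2) the $i_1$-layer is a subgraph of the $j_1$-layer, so $\{i_2,\ldots,i_d\}\in H_{j_1}$, i.e.\ $\{j_1,i_2,\ldots,i_d\}\in H$, as needed. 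For ($\Leftarrow$): assume the push-down closure. Clause (1): fix $v\in V(H)$; an edge of $H_v$ pushed down within $\{v+1,\ldots,n\}$ corresponds to pushing down coordinates $i_2,\ldots,i_d$ of a generator of $I_H$ with first variable $x_v$, so closure of $I_H$ gives the pushed tuple in $I_H$; its smallest variable is still $v$ (the pushed values stay $>v$), so it lies in $H_v$, proving $H_v$ is cointerval by the inductive hypothesis. Clause (2): for $i<j$ and an edge $\{v_1,\ldots,v_{d-1}\}$ of $H_j$ (so $\{j,v_1,\ldots,v_{d-1}\}\in H$ with $j<v_1<\cdots$), push the first coordinate $j$ down to $i$; since $i<j<v_1$ the result $\{i,v_1,\ldots,v_{d-1}\}$ is still increasing and $i$ is unused, so closure gives $\{i,v_1,\ldots,v_{d-1}\}\in H$, i.e.\ $\{v_1,\ldots,v_{d-1}\}\in H_i$.

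The base case $d=1$ is immediate: every $1$-graph is cointerval and the push-down condition on degree-$1$ generators is vacuous/automatic. The main obstacle I anticipate is the normalization step in the first paragraph above — making precise that an arbitrary push-down in the Lemma (which allows several coordinates to drop simultaneously, possibly into values that coincide with other $i_\ell$'s or cross over them) decomposes into the elementary one-coordinate-at-a-time moves that the recursive Definition is built to handle. One has to be careful that the intermediate tuples are genuinely of the form appearing in the Lemma (weakly increasing comparisons coordinatewise) and that no spurious repeated variable is introduced; handling the case $j_s$ landing on an existing $i_\ell$ requires observing that such a ``collision'' push-down is implied by a strictly smaller one plus the increasing-tuple normalization, so it can be discarded. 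Once that combinatorial reduction is set up cleanly, both directions are a direct unwinding of Definition~\ref{def:intHyp}.
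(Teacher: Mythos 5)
The paper never proves this lemma --- it is recalled from \cite{MKM} as an ``equivalent definition'' --- so your argument has to stand on its own, and the forward direction does not. You have read the hypothesis as a \emph{single-generator} push-down closure: $m=x_{i_1}\cdots x_{i_d}\in I$ alone forces every coordinatewise-smaller monomial into $I$. That condition is exactly the definition of a squarefree strongly stable ideal generated in degree $d$, whereas the paper stresses that cointerval ideals are a strictly larger class. Concretely, the paper's own running example $I=\langle x_1x_2,x_1x_3,x_1x_5,x_2x_3,x_2x_5,x_3x_5,x_4x_5\rangle$ is cointerval, yet $x_3x_5\in I$ while $x_3x_4\notin I$; even simpler, $\langle x_1x_3,x_2x_3\rangle$ is cointerval (its $1$-- and $2$--layers both equal $\{\{3\}\}$, so clauses (1) and (2) of Definition~\ref{def:intHyp} hold) but $x_1x_2\notin I$. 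So ``cointerval $\Rightarrow$ push-down closed'' is false as you have formulated it. The precise point where your induction fails is the base case: every $1$--graph is cointerval by fiat, but the push-down condition in degree $1$ is \emph{not} vacuous --- the $1$--graph $\{\{3\}\}$ is cointerval and is not closed under lowering $3$ to $2$. Your inductive step for lowering a non-minimal coordinate $i_s$, $s\ge 2$, rests entirely on this false base case through clause (1). (Your treatment of lowering $i_1$ via clause (2), and your whole backward direction, are fine; that direction is just the true assertion that squarefree strongly stable ideals are cointerval.)

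The way to make the statement true is to read the tuple $(j_1,\dots,j_d)$ as indexing a \emph{second generator} $x_{j_1}\cdots x_{j_d}\in I$: the condition is an exchange property between pairs of generators, asserting that if both monomials lie in $I$ and $j_\ell\le i_\ell$ for $\ell\le t$, then the mixed monomial $x_{j_1}\cdots x_{j_t}x_{i_{t+1}}\cdots x_{i_d}$ lies in $I$. This reading is also what the paper's later use of the lemma (in Lemma~\ref{lemma:basis}) requires. Your induction scheme can then be salvaged: for $t=1$ the claim follows from clause (2) alone, since $H_{i_1}\subseteq H_{j_1}$; for $t\ge 2$ both $\{i_2,\dots,i_d\}$ and $\{j_2,\dots,j_d\}$ lie in the cointerval layer $H_{j_1}$ (the first by clause (2), the second by definition), and the inductive hypothesis applies there; and the base case $t=d$ is now genuinely trivial because the mixed monomial is the second generator itself. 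As written, however, your proof argues for a false statement in one direction, so this is not a cosmetic issue.
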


From \cite{MKM} we also take the following result.

\begin{Theorem}
\cite[Theorem 2.5]{MKM}
Let $I_H$ be the edge ideal of a generalized $d$-Ferrers hypergraph $H$, so that $I_H$ is a cointerval ideal.  Then $I_H$ is weakly polymatroidal, and in particular has linear quotients with respect to lexicographic order on its generators.
\end{Theorem}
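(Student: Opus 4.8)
The plan is to prove the two assertions of the theorem separately: first that $I_H$ is weakly polymatroidal, and then that weakly polymatroidal ideals have linear quotients with respect to lexicographic order. For the first part, recall that a monomial ideal $I$ generated in degree $d$ is \emph{weakly polymatroidal} if for every two generators $m = x_{i_1}^{a_1}\cdots x_{i_n}^{a_n}$ and $m' = x_{i_1}^{b_1}\cdots x_{i_n}^{b_n}$ of $I$ with $a_1 = b_1, \dots, a_{s-1} = b_{s-1}$ and $a_s > b_s$ for some $s$, there exists an index $j > s$ with $x_s m'/x_j \in I$. In our squarefree setting, I would take two edges $e, e'$ of $H$, write them as sorted tuples, and locate the first coordinate where they differ. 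Using the characterization of cointerval ideals from Lemma~\ref{cointerval} (the ``shifting down'' property for sorted tuples), I would show that replacing the relevant larger index in $e'$ by the smaller corresponding index from $e$ keeps us inside $I_H$, which is exactly what the weakly polymatroidal condition demands. The bookkeeping here is matching the coordinate-wise definition of weakly polymatroidal against the tuple-wise ``$j_1 \le i_1, \dots, j_t \le i_t$'' condition; this is a translation between two indexing conventions rather than a deep argument.

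For the second part — that weakly polymatroidal ideals have linear quotients under lex order — I would argue as follows. Order the generators $m_1 > m_2 > \cdots > m_k$ lexicographically and fix $j$. I claim $I_{j-1} : m_j$ is generated by variables. Take any generator $m_\ell$ with $\ell < j$; then $m_\ell >_{\mathrm{lex}} m_j$, so there is a first coordinate $s$ where the exponent of $x_s$ in $m_\ell$ exceeds that in $m_j$ (and all earlier exponents agree). Applying the weakly polymatroidal property to the pair $(m_\ell, m_j)$ gives an index $t > s$ with $w := x_s m_j / x_t \in I$. One checks $w \in I_{j-1}$: indeed $w >_{\mathrm{lex}} m_j$ since $w$ agrees with $m_j$ in coordinates $1, \dots, s-1$ but has strictly larger $x_s$-exponent, so the generator of $I$ dividing $w$ is lex-larger than $m_j$, hence lies among $m_1, \dots, m_{j-1}$. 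Therefore $x_s \in I_{j-1} : m_j$ (since $x_s m_j = w x_t \in (w) \subseteq I_{j-1}$). Since $m_\ell$ divides $\mathrm{lcm}(m_1,\dots,m_{j-1}, m_j)$-type data... more directly: the colon $I_{j-1} : m_j$ is generated by the monomials $m_\ell / \gcd(m_\ell, m_j)$, and I would show each such monomial is divisible by some variable $x_s$ that already lies in the colon ideal by the argument just given, so the colon ideal equals $\langle x_s : s \in \set(m_j)\rangle$.

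The main obstacle, and the step deserving the most care, is verifying that the variable $x_s$ produced by the weakly polymatroidal property actually lands in $I_{j-1}:m_j$ and not merely in $I:m_j$ — i.e., that the ``new'' generator $w$ we land on is genuinely lex-earlier than $m_j$ and hence already among the $m_1,\dots,m_{j-1}$. This hinges on the observation that $w$ and $m_j$ share their first $s-1$ exponents while $w$ has a strictly larger exponent in position $s$, forcing $b(w) >_{\mathrm{lex}} m_j$; one must be careful that $b(w)$ (the lex-largest generator dividing $w$) indeed inherits this lex inequality, which follows since any generator dividing $w$ agrees with $w$ — hence with $m_j$ — up through coordinate $s-1$ and has $x_s$-exponent at least... actually at least as large only if that generator uses $x_s$, so one argues instead that $w$ itself, being in $I_{j-1}$ as a consequence of being lex-above $m_j$ among all monomials of $I$, does the job. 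I would also handle the degenerate possibility $s = \max$ or $t$ not existing by noting the squarefree degree-$d$ hypothesis and Lemma~\ref{cointerval} guarantee the required ``down-shift'' always stays in $I$. Once this colon-ideal computation is in place, linear quotients is immediate from the definition.
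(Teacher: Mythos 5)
The paper does not prove this statement at all --- it is imported verbatim as \cite[Theorem 2.5]{MKM}, so there is no in-paper argument to compare against; what you have written is essentially the standard proof (and, as far as the two-step structure goes, the one in the cited source): first, cointerval $\Rightarrow$ weakly polymatroidal via the ``down-shift'' characterization of Lemma~\ref{cointerval}, and second, weakly polymatroidal $\Rightarrow$ linear quotients in lex order via the colon-ideal computation. Both halves of your outline are sound. For the first half, the witness is explicit: if $e = x_{i_1}\cdots x_{i_d}$ and $e' = x_{j_1}\cdots x_{j_d}$ (sorted) first differ at position $t$ with $i_t < j_t$, then $s = i_t$ is the first coordinate where the exponent vectors differ, and $x_{i_t}e'/x_{j_t} \in I_H$ by Lemma~\ref{cointerval} applied with the prefix $(j_1,\dots,j_{t-1},i_t) \le (j_1,\dots,j_{t-1},j_t)$ coordinatewise; note $j_t > s$ as required.

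The one place you wobble --- and correctly identify as the delicate step --- is showing $w = x_s m_j/x_t$ lies in $I_{j-1}$ rather than merely in $I$. Your detour through ``$b(w)$, the lex-largest generator dividing $w$'' is unnecessary and does not quite close: a generator dividing a monomial need not inherit its lex comparison with $m_j$ in general. The clean repair, which you gesture at but should state outright, is that $I_H$ is generated in the single degree $d$ and $w$ has degree $d$, so $w \in I$ forces $w$ to \emph{be} a generator; since $w$ agrees with $m_j$ in coordinates $1,\dots,s-1$ and has strictly larger exponent at $s$, it is lex-greater than $m_j$ and hence equals some $m_r$ with $r < j$. Then $x_s m_j = x_t w \in I_{j-1}$, and since $x_s$ divides $m_\ell/\gcd(m_\ell,m_j)$, every generator of $I_{j-1}:m_j$ is divisible by a variable already in that colon ideal, which gives linear quotients. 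With that sentence inserted, your argument is complete.
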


Next we recall the construction of the polyhedral complex $X_H$ that supports a minimal free resolution of the cointerval ideal associated to the hypergraph $H$.  For subsets $\sigma_i, \sigma_j \subset [n] = \{1,2,\dots, n\}$  we say that $\sigma_i < \sigma_j$ if $x < y$ for all $x \in \sigma_i$ and $y \in \sigma_j$.  We use the notation $\Delta_S$ to denote the simplex with vertex set $S$.

\medskip

\begin{Definition}~\label{def:Xsupp}
Let $H$ be a $d$--graph on vertex set $V(H) = \{v_1, v_2 \dots, v_n \}$. The polyhedral complex $X_H$ is defined to be the
subcomplex of the product
\[ \prod_{i=1}^d \Delta_{V(H)} \] satisfying
\begin{itemize}
\item[(1)] The vertices of $X_H$ are $v_{i_1} \times v_{i_2} \times \dots \times v_{i_d}$, where $v_{i_1}v_{i_2} \cdots v_{i_d}$ is an edge of $H$;

\item[(2)] For $\sigma_i \subseteq V(H)$, the cells $\sigma_1 \times \sigma_2 \times \cdots \times \sigma_d$ satisfy $\sigma_1 < \sigma_2 < \cdots < \sigma_d$.
\end{itemize}
\end{Definition}

Note that for any $d$-graph $H$, the faces of the complex $X_H$ are naturally labeled by monomials.  In particular, the vertices are labeled by monomials corresponding to the edges of $H$ (i.e. the generators of $I_H$), and the higher dimensional faces $F = \sigma_1 \times \sigma_2 \times \cdots \times \sigma_d$ are labeled by
\[ \prod_{i=1}^d \prod_{v_j\in \sigma_i} x_{j}, \]
\noindent
which can be seen as equal to the least common multiple of the monomial labels on the vertices of $F$.

\begin{Remark}
Viewing $H$ as a directed $d$-graph (with orientation on the edges given by the integer labels on the vertices), one can regard $X_H$ as a `space of directed edges' of $H$.   Indeed, if we let $E$ denote the $d$-graph with vertex set $[d]$ consisting of a single edge $\{1,2,\dots,d\}$, then $X_H = \textrm{Hom}(E,H)$, a space of directed graph homomorphisms from $E$ to $H$ analogous to the undirected $\textrm{Hom}$ complexes of \cite{BK}.  This perspective was also employed in \cite{BBK} where the authors study ideals arising from more general (nondegenerate) simplicial homomorphisms.
\end{Remark}

The main result from \cite{DE} is that these complexes support minimal cellular resolutions of cointerval ideals.

\begin{Theorem} \cite[Theorem 4.1]{DE}
\label{thm:resCoInt}
Let $H$ be a cointerval $d$--graph on vertex set $[n]$. Then the polyhedral complex $X_H$ supports a minimal cellular resolution of the edge ideal $I_H$.
\end{Theorem}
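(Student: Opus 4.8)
The plan is to invoke the criterion of Bayer and Sturmfels \cite{BayerSturmfels}: a labelled polyhedral complex $X$ whose vertices are labelled by the generators of a monomial ideal $I$ and whose higher faces are labelled by the least common multiple of their vertex labels supports a cellular resolution of $I$ if and only if, for every exponent vector $\mathbf b$, the subcomplex $X_{\preceq \mathbf b}$ of faces whose label divides $x^{\mathbf b}$ is either empty or has vanishing reduced homology; the resolution is moreover minimal precisely when no face shares a label with one of its codimension‑one faces. For $X_H$ the minimality condition is immediate: a face $\sigma_1 \times \cdots \times \sigma_d$ carries the squarefree label $\prod_{j \in \sigma_1 \cup \cdots \cup \sigma_d} x_j$, where the $\sigma_i$ are pairwise disjoint because $\sigma_1 < \cdots < \sigma_d$, and deleting any vertex strictly lowers this monomial. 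So the entire content is the acyclicity of the complexes $X_{\preceq \mathbf b}$.

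Next I would reduce acyclicity to a single contractibility statement. Since all labels are squarefree we may take $\mathbf b$ squarefree, say $x^{\mathbf b} = \prod_{j \in W} x_j$ for some $W \subseteq [n]$. A face of $X_H$ lies in $X_{\preceq \mathbf b}$ exactly when all of its vertices — which are edges of $H$ — are contained in $W$, that is, when it is a face of $X_{H[W]}$ for $H[W]$ the induced subhypergraph. A short induction on $d$ from Definition \ref{def:intHyp} shows $H[W]$ is again cointerval (clause (1) passes to the layers of $H[W]$ by induction, and clause (2) is inherited by restriction). Hence it suffices to prove that $X_H$ is contractible for every nonempty cointerval $d$‑graph $H$; if $H[W]$ has no edge then $X_{H[W]}$ is empty and there is nothing to check.

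The heart of the argument — and the step I expect to demand the most care — is this contractibility claim, which I would establish by induction on $d$ via an explicit strong deformation retraction. Let $u_0$ be the smallest vertex occurring in an edge of $H$. The key combinatorial observation, which uses both clauses of cointervality, is that whenever $\sigma_1 \times \cdots \times \sigma_d$ is a face of $X_H$, so is $(\sigma_1 \cup \{u_0\}) \times \sigma_2 \times \cdots \times \sigma_d$: the order condition holds because $u_0$ is minimal, and the transversal condition $u_0 w_2 \cdots w_d \in E(H)$ for all $w_i \in \sigma_i$ follows because, fixing any $u \in \sigma_1$, the set $\{w_2,\dots,w_d\}$ lies in the $u$‑layer of $H$, which by clause (2) is contained in the $u_0$‑layer. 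Consequently every face of $X_H$ is a face of one whose first coordinate contains $u_0$, and ``sliding the first barycentric coordinate of a point linearly toward the vertex $u_0$'' defines a strong deformation retraction of $X_H$ onto the subcomplex with first coordinate fixed at $u_0$, which is canonically identified with $X_{H_{u_0}}$ for $H_{u_0}$ the $u_0$‑layer. By clause (1) this is a nonempty cointerval $(d-1)$‑graph, so $X_{H_{u_0}}$ is contractible by induction; the base case $d = 1$ is trivial, since then $X_H$ is a full simplex on the singleton edges of $H$. This gives the acyclicity required by Bayer--Sturmfels, and combined with the minimality observation above it proves the theorem. The points to watch are that the sliding retraction is well defined and continuous across shared faces — it depends only on the coordinates of a point, so agreement on overlaps is automatic — and the transversal computation that legitimizes adjoining $u_0$, which is exactly where cointervality enters.
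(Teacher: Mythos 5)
The paper does not prove this statement: it is quoted verbatim as \cite[Theorem 4.1]{DE} and used as a black box, so there is no internal proof to compare against. Judged on its own, your argument is correct and complete, and it follows the same overall strategy as the original source: apply the Bayer--Sturmfels acyclicity criterion, observe that every restricted subcomplex $X_{\preceq \mathbf b}$ is $X_{H[W]}$ for the induced subhypergraph on $W = \supp(\mathbf b)$ (since all labels are squarefree), check that cointervality is inherited by induced subhypergraphs, and reduce everything to contractibility of $X_H$ for nonempty cointerval $H$. Your contractibility step -- retracting the first factor onto the globally minimal vertex $u_0$ and identifying the image with $X_{H_{u_0}}$ for the $u_0$--layer, then inducting on $d$ -- is sound: the only points that genuinely need checking are the ones you flag, namely that $(\sigma_1\cup\{u_0\})\times\sigma_2\times\cdots\times\sigma_d$ remains a cell of $X_H$ (clause (2) of Definition \ref{def:intHyp} applied to the pair $u_0 < u$ for any $u\in\sigma_1$, plus minimality of $u_0$ for the ordering condition) and that the linear homotopy is defined cell-by-cell by a formula depending only on the point, hence glues. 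The minimality check via distinct squarefree labels on codimension-one faces is also right, since $\sigma_1<\cdots<\sigma_d$ forces the factors to be disjoint. One small remark: your ``sliding'' retraction is essentially a fold of $H$ in the $\Hom$-complex sense, which is the mechanism underlying the treatment in \cite{DE} as well, so the route is not genuinely different from the cited proof, merely made explicit at the level of a deformation retraction rather than a combinatorial collapse.
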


One nice thing about the spaces $X_H$ is tdhat the differential maps are so easy to describe. Indeed, if $\sigma_1 \times \sigma_2 \times \cdots \times \sigma_d$ is a cell of $X_H$, we can write $d \big(\sigma_1 \times \sigma_2 \times \cdots \times \sigma_d \big)$ as:

\begin{equation} \label{eq:topcell}
\sum_{\ell = 1} ^d (-1)^{\ell-1} \sum_{j} (-1)^{j + |\sigma_1| + \dots + |\sigma_{\ell-1}|}  \sigma_1 \times \sigma_2 \times \cdots \times \{\sigma_{\ell_1}, \sigma_{\ell_2}, \dots, \widehat{\sigma_{\ell_j}}, \dots, \sigma_{\ell_k} \} \times \cdots \times \sigma_d. 
\end{equation}

\noindent
Here we remove the element $\sigma_{i_j}$ only if it leaves a nonempty subset, that is if $|\sigma_i| \geq 2$.  These are the differential maps we wish to recover in Theorem \ref{Theorem_Homcone}, where we show that the homomorphism complex $X_H$ can in fact also be realized as an iterated geometric mapping cone construction.  We first need another description of the basis elements.

\begin{Lemma} \label{lemma:basis}
Let $I = I_H$ be a cointerval ideal associated to the cointerval $d$--graph $H$, and let $F$ be its minimal free resolution.  Then the basis elements for each free $R$-module $F_i$ determined by the cellular resolution $X_H$ correspond to the symbols
\[ (m;\alpha),\quad {\textrm where} \; \; m\in G(I), \quad \alpha\subset \set(m),\quad |\alpha|=i-1. \]
\end{Lemma}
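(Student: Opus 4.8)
The plan is to produce an explicit, grading-preserving bijection between the $(i-1)$-dimensional cells of $X_H$ and the symbols $(m;\alpha)$ with $m\in G(I)$, $\alpha\subseteq\set(m)$, $|\alpha|=i-1$; here ``grading-preserving'' means the monomial label of a cell equals the multidegree $m\prod_{t\in\alpha}x_t$ of the corresponding symbol. Since $X_H$ supports a minimal cellular resolution of $I=I_H$ (Theorem~\ref{thm:resCoInt}), its $(i-1)$-cells index a basis of $F_i$ whose multidegrees are exactly the cell labels, so exhibiting such a bijection proves the lemma. This is also consistent with Lemma~\ref{sets}, which applies since $I_H$ has linear quotients with respect to the lexicographic order on its generators.

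The first step is to compute $\set(m)$ for a generator $m=x_{b_1}x_{b_2}\cdots x_{b_d}$ of $I$, with $b_1<b_2<\cdots<b_d$. I claim
\[
\set(m)=\{\,t\in[n]\setminus\supp(m)\ :\ t<b_d\,\}.
\]
If $t\in\supp(m)$ then $m$ is the only squarefree degree-$d$ monomial dividing $x_t m$; if $t>b_d$ then every other such divisor is lexicographically smaller than $m$. In either case no generator $m_i>_{\operatorname{lex}}m$ divides $x_t m$, so $x_t m\notin\langle m_1,\dots,m_{j-1}\rangle$ (writing $m=m_j$) and $t\notin\set(m)$. Conversely, let $t\in[n]\setminus\supp(m)$ with $b_{s-1}<t<b_s$ (set $b_0=0$, so $b_s$ is the smallest $b_i$ exceeding $t$). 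Then $m':=x_t m/x_{b_s}=x_{b_1}\cdots x_{b_{s-1}}\,x_t\,x_{b_{s+1}}\cdots x_{b_d}$ divides $x_t m$; the least index at which $m$ and $m'$ differ is $t$, which divides $m'$, so $m'>_{\operatorname{lex}}m$; and $m'\in I$ by Lemma~\ref{cointerval} applied with $j_1=b_1,\dots,j_{s-1}=b_{s-1}$ and $j_s=t\le b_s$. Being a squarefree degree-$d$ element of $I$, the monomial $m'$ is a generator, hence one of $m_1,\dots,m_{j-1}$, and therefore $x_t\in\langle m_1,\dots,m_{j-1}\rangle:m_j$, i.e.\ $t\in\set(m)$.

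For the forward map, let $C=\sigma_1\times\sigma_2\times\cdots\times\sigma_d$ be an $(i-1)$-cell of $X_H$, so $\sigma_1<\sigma_2<\cdots<\sigma_d$ and $v_1v_2\cdots v_d\in E(H)$ for every choice of $v_\ell\in\sigma_\ell$. Put $m_C=\prod_{\ell=1}^d x_{\max\sigma_\ell}$ and $\alpha_C=\big(\bigcup_\ell\sigma_\ell\big)\setminus\supp(m_C)$. Since $\max\sigma_1<\cdots<\max\sigma_d$ and $\max\sigma_1\times\cdots\times\max\sigma_d$ is a vertex of $X_H$ contained in $C$, the monomial $m_C$ is a generator of $I$; in fact it is the lexicographically smallest vertex label of $C$, which is why it is the right base generator for a cell in the mapping-cone picture, where the neighbours of the base generator are lexicographically larger. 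As $\sigma_1<\cdots<\sigma_d$, each element of $\alpha_C$ avoids $\supp(m_C)$ and is strictly smaller than $\max\sigma_d=\max\supp(m_C)$, so $\alpha_C\subseteq\set(m_C)$ by the computation above; also $|\alpha_C|=\sum_\ell(|\sigma_\ell|-1)=\dim C=i-1$, and the label $\prod_{u\in\bigcup_\ell\sigma_\ell}x_u$ of $C$ equals $m_C\prod_{t\in\alpha_C}x_t$. For the inverse, given $(m;\alpha)$ with $m=x_{b_1}\cdots x_{b_d}$ ($b_1<\cdots<b_d$) and $\alpha\subseteq\set(m)$, partition $\supp(m)\cup\alpha$ into $\sigma_\ell=\{\,u\in\supp(m)\cup\alpha:b_{\ell-1}<u\le b_\ell\,\}$ (again $b_0=0$): each $\sigma_\ell$ contains $b_\ell$ and hence is nonempty, $\sigma_1<\cdots<\sigma_d$ by construction, and for any $v_\ell\in\sigma_\ell$ we have $v_\ell\le b_\ell$, so $v_1v_2\cdots v_d\in E(H)$ by Lemma~\ref{cointerval} applied to the edge $m$; thus $\sigma_1\times\cdots\times\sigma_d$ is a cell of $X_H$. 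One checks immediately that the two maps are mutually inverse and match dimensions and multidegrees, which gives the bijection and proves the lemma.

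The steps I expect to require the most care are the computation of $\set(m)$ and the verification that the inverse construction yields an honest cell of $X_H$; both come down to a careful bookkeeping of the cointerval condition in Lemma~\ref{cointerval}, but neither is deep.
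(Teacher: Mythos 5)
Your overall strategy is the same as the paper's (an explicit, degree-preserving bijection between the $(i-1)$-cells of $X_H$ and the symbols $(m;\alpha)$, with the same forward map $C\mapsto(\prod_\ell x_{\max\sigma_\ell};\,\bigcup_\ell\sigma_\ell\setminus\max\sigma_\ell)$ and the same inverse), but your computation of $\set(m)$ is wrong, and the error is load-bearing. You claim $\set(m)=\{t\notin\supp(m):t<b_d\}$, deriving from Lemma~\ref{cointerval} that for any $t$ with $b_{s-1}<t<b_s$ the swap $x_tm/x_{b_s}$ lies in $I$. This is false for $s\geq 2$: take the paper's running cointerval example $I=\langle x_1x_2,x_1x_3,x_1x_5,x_2x_3,x_2x_5,x_3x_5,x_4x_5\rangle$ and $m=x_3x_5$. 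Your formula gives $\set(x_3x_5)=\{1,2,4\}$, but a direct computation of the colon ideal (and the paper's own Example in Section~\ref{MoreRes}) gives $\set(x_3x_5)=\{1,2\}$; indeed $x_3x_4\notin I$ even though $H$ is cointerval. The recursive definition of cointervality only forces edges to be closed under decreasing the \emph{smallest} vertex (and, layer by layer, under decreasing $b_s$ \emph{within} the $s$-th block \emph{provided the result is still an edge of the relevant layer}); it does not make every componentwise-smaller transversal an edge — that would be the strictly smaller class of shifted ideals. Reading Lemma~\ref{cointerval} as licensing your deduction makes it contradict the paper's own example, so the safe and correct statement is the one in the paper's proof: for $b_{s-1}<j<b_s$ with $s\geq2$, membership $j\in\set(m)$ requires the extra condition $x_{b_1}\cdots x_{b_{s-1}}x_jx_{b_{s+1}}\cdots x_{b_d}\in I$, which is not automatic.

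This gap breaks the bijection as you state it: with the inflated $\set(m)$ the symbol $(x_3x_5;\{4\})$ exists but has no corresponding cell, since $\{3\}\times\{4,5\}$ is not a face of $X_H$ ($\{3,4\}$ is not an edge). To repair the argument, replace the $\set(m)$ computation by the correct characterization above; then the containment $\alpha_C\subseteq\set(m_C)$ in your forward map should be deduced not from a formula for $\set(m)$ but from the transversal condition defining $X_H$ (the tuple $(\max\sigma_1,\dots,\max\sigma_{\ell-1},t,\max\sigma_{\ell+1},\dots,\max\sigma_d)$ is a transversal of $C$, hence an edge, hence a lex-larger generator dividing $x_tm_C$). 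Conversely, for the inverse map you still owe the verification that \emph{every} transversal of the box $\sigma_1\times\cdots\times\sigma_d$ is an edge, i.e., that single-coordinate swaps being edges implies simultaneous swaps are edges; this is where the cointerval structure genuinely enters and cannot be waved through with the componentwise-domination reading of Lemma~\ref{cointerval}.
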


\begin{proof}
Suppose $I_H$ is a cointerval ideal with monomial generators $m_1, m_2, \dots, m_k$ listed in lexicographical order.  From Lemma \ref{cointerval} we see that for any generator $m = x_{i_1}x_{i_2} \cdots x_{i_d}$ we have
\[\set(m) = \{j \in [n]: \textrm{$j < i_1$\, or for some $t \geq 1$ we have $i_t < j < i_{t+1}, \; x_{i_1} \cdots x_{i_t} x_j x_{i_{t+2}} \cdots x_{i_d} \in I\}$}. \]

Since $X_H$ supports a minimal cellular resolution of $I_H$, we have that a basis for $F_i$ is given by the number of $i-1$ dimensional faces of $X_H$.
 From Definition \ref{def:Xsupp} we have an explicit description of these faces, which we now want to show are naturally labeled by the symbols $(m; \alpha)$.

Suppose $m = x_{i_1}x_{i_2} \cdots x_{i_d} \in G(I)$ and $\alpha \subset \set(m)$.  We associate the symbol $(m; \alpha)$ to the face $\sigma_1 \times \sigma_2 \times \cdots \times \sigma_{d}$, where for $1 \leq \ell \leq d$ we define $\sigma_\ell = \{i_\ell\} \cup \{j \in \alpha: i_{\ell-1} < j < i_\ell\}$ (by convention we set $i_{0} = 0$).  To see that this assignment defines a bijection we describe the inverse.  For this, suppose $\sigma_1 \times \sigma_2 \times \cdots \times \sigma_d$ is a face of $X_H$.  Define $m = x_{i_1}x_{i_2} \cdots x_{i_d}$, where for each $\ell$ with $1 \leq \ell \leq d$ we set $i_\ell = \max(\sigma_\ell)$.  Then define 

\[ \alpha = \bigcup_\ell \;  (\sigma_\ell \backslash i_\ell). \]

\noindent
One can check that these assignments are inverses of one another.
\end{proof}

The differentials of the resolution supported on the homomorphism complex are described by the incidence face structure of the polyhedral complex $X_H$.  To recover these differentials as an iterated mapping cone, we need a new notion of decomposition function.

\medskip

Let $I$ be a cointerval monomial ideal with the sequence of generators $m_1, m_2, \ldots,m_k$ listed in lexicographical order.  As above let $G(I)$ be the set of all monomials in $I$.  Let $xG(I)$ be the set of all products $x_i m_j$, where $m_j \in G(I)$ is a generator of $I$, and $x_i$ is a variable in the polynomial ring such that $i \in \set(m_j)$. 

\begin{Definition}

With the notation established a above, the map $c: xG(I)\rightarrow G(I)$ is defined as follows.   Suppose $m_j = x_{j_1} x_{j_2} \cdots x_{j_d}$, with $j_1 < j_2 < \cdots < j_d$ and let $x_i$ such that $i \in \set(m)$.   Pick the minimum $j_{k}$ such that $j_{k-1} < i \leq j_{k}$, and define $c(x_i m) = x_i m/x_{j_k}$.

\end{Definition}

We note that this assignment is different than the \emph{decomposition function} from \cite{HT}.  In particular, our assignment does \emph{not} in general satisfy 
\[c(x_sc(x_t m)) = c(x_tc(x_sm)) \]

(the edge ideal of the complete 3-graph on $n = 5$ vertices provides a counterexample, where $\set(x_3x_4x_5) = \{1,2\}$). 

We will use this new decomposition function $c$ to encode the map of chain complexes $\psi$ involved in the mapping cone construction.  We first set up some further notation.  For any generator $m=x_{i_1}x_{i_2}\cdots x_{i_d}$ of $I$ we decompose $\set(m)$ as 
\[
\set(m)=A_1\cup A_2\cup\cdots \cup A_d,
\]
where 
\[
A_\ell=\{j\in [n]:\  i_{\ell-1}< j < i_\ell {\rm\ such\ that\ we\ have\ } x_{i_1} \cdots x_{i_{\ell-1}} x_j x_{i_{\ell+1}} \cdots x_{i_d} \in I\}.
\]

\begin{Notation}
Let $\alpha\subset \set(m)$. For each $1\leq \ell\leq d$, let $s_\ell$ denote the maximum element of $\alpha\cap A_\ell$, and $s'_{\ell}$ denote the maximum element of $\alpha\cap(A_\ell\backslash\{s_\ell\})$. Let $T(\alpha)=\{s_1,\ldots,s_d\}$.
\end{Notation}

\medskip

\begin{Example}
Consider again our running example 
\[I = \langle x_1x_2, x_1x_3,x_1x_5, x_2x_3, x_2x_5, x_3x_5, x_4x_5 \rangle. \]
\noindent
We see that for $m = x_2x_5$ and $\alpha = \set(x_2x_5) = \{1,3\}$, we have $T(\alpha) = \{1,3\}$.  On the other hand, for $m = x_3x_5$ and $\alpha = \set(x_3 x_5) = \{1,2\}$, we get $T(\alpha) = \{2\}$.
\end{Example}

\begin{Remark}\label{rem:s,t}
Let $t,s\in A_\ell$ for some $\ell$ and $t>s$. Then $c(x_s(x_tm))=c(x_sm)$, since $i_{\ell-1}<s<t<j_\ell$.  Here we again employ the shorthand notation $c(x_s(x_tm)) = c(x_s c(x_tm))$.
\end{Remark}

\begin{Remark}
Note that \eqref{eq:topcell} can be written as 
\begin{eqnarray*}
\sum_{\ell = 1} ^d (-1)^{\ell -1} \sum_{j=1}^{s_\ell} (-1)^{j + |\sigma_1| + \cdots + |\sigma_{\ell - 1}|} \sigma_1 \times \sigma_2 \times \cdots \times \{\sigma_{\ell_1}, \sigma_{\ell_2}, \dots, \widehat{\sigma_{\ell_j}}, \dots, \sigma_{\ell_k} \} \times \cdots \times \sigma_d
\\
+ \sum_{\ell = 1\atop {|A_\ell|>0}} ^d (-1)^{\ell - 1 +|\sigma_1| + \cdots + |\sigma_{\ell}|} \sigma_1 \times \sigma_2 \times \cdots \times \sigma_\ell\backslash \max(\sigma_{\ell})  \times \cdots \times \sigma_d.
\end{eqnarray*}
Recall from Lemma \ref{lemma:basis} that each $\sigma_1 \times \sigma_2 \times \cdots \times \sigma_d$ corresponds to a symbol $(m; \alpha)$.  The first summand above is taken over all elements of $\alpha$ and the second summand is over the indices of the elements of $\supp(m)$.  The second summand can be also considered over the elements of $T(\alpha)$ since in case that $|T(\alpha)\cap A_\ell|>0$ we have that $c(x_{s_\ell}m)={x_{s_\ell} m}/x_{\max(\sigma_{\ell})}$. 
\end{Remark}

\begin{figure}
\begin{center}
  \includegraphics[scale = .85]{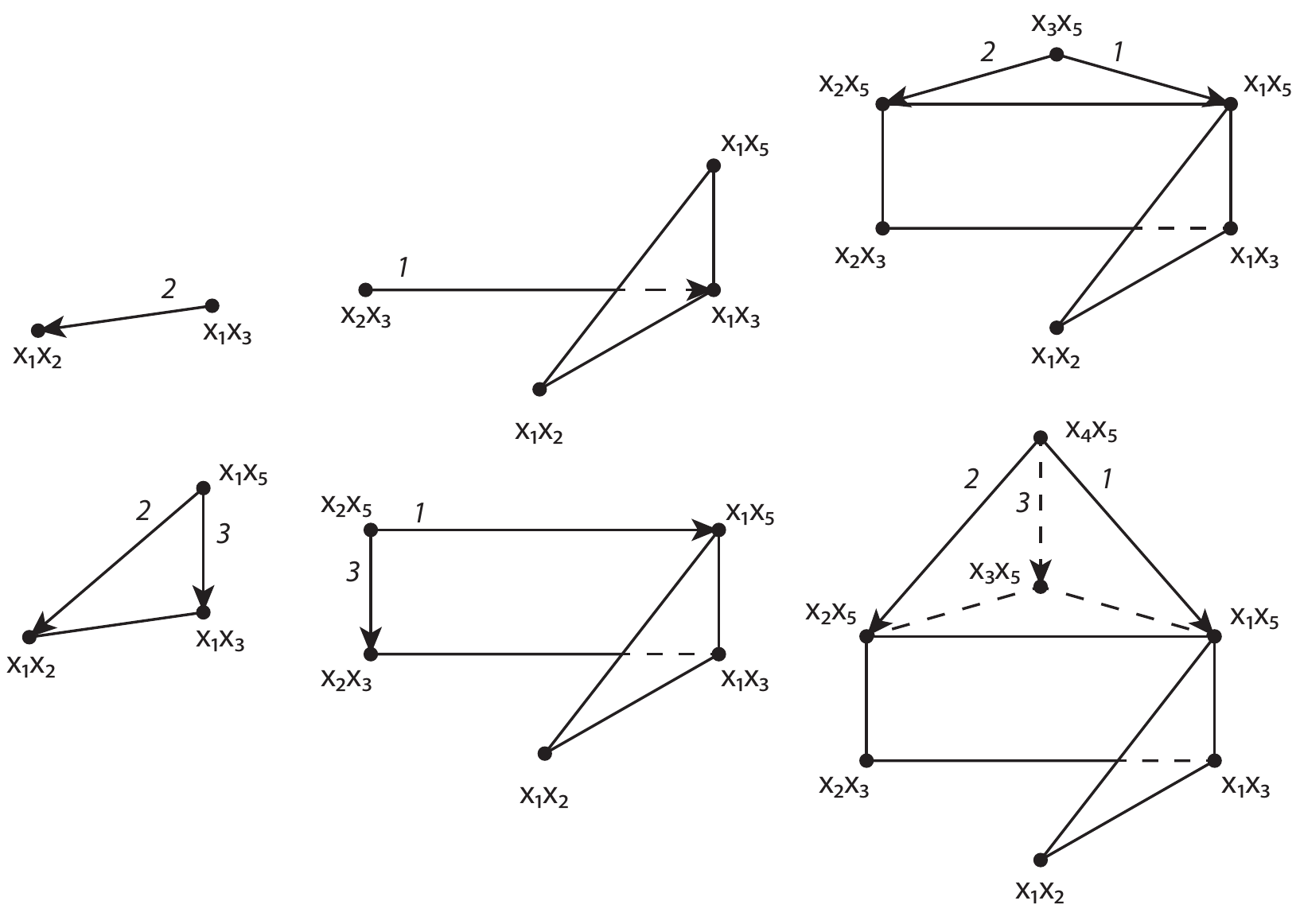}
\label{Homresolution}
    \caption{The resolution of $I = \langle x_1x_2, x_1x_3,x_1x_5, x_2x_3, x_2x_5, x_3x_5, x_4x_5 \rangle$ built from an iterated mapping cone as a realization of the homomorphism complex.}

\end{center}
\end{figure}

\begin{Theorem} \label{Theorem_Homcone}
Let $I = I_H$ be the monomial edge ideal associated to a cointerval $d$-graph $H$, and ${\mathcal F}$ the graded
minimal free resolution of $R/I$ obtained from the homomorphism complex $X_H$.
 Then ${\mathcal F}$ is realized as an iterated mapping cone, with the basis for each module $F_i$ as above.  The chain map $d$ of ${\mathcal F}$ is given by
 \[
     d(m;\alpha) = \sum_{j_i\in \alpha}(-1)^{i} x_{j_i} (m;\alpha\setminus {j_i}) +
\sum_{{j_i\in T(\alpha)}}(-1)^{i-1} \frac{x_{j_i} m}{c(x_{j_i}m)} (c(x_{j_i} m);\alpha\setminus {j_i})
\]

if $\sigma\neq \emptyset$, where $\alpha=\{{j_1},\ldots, {j_p}\}\subset \set(m)$ with $j_1<\cdots<j_p$, and
$d(m,\emptyset) = m$ otherwise.

 \end{Theorem}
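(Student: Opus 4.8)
The plan is to verify that the iterated mapping cone construction, with the map of chain complexes $\psi$ encoded by the new decomposition function $c$, produces exactly the differential displayed in the statement, and that this differential coincides with the one of the polyhedral complex $X_H$ given in \eqref{eq:topcell}. First I would recall from Lemma~\ref{lemma:basis} the bijection between the symbols $(m;\alpha)$ and the faces $\sigma_1\times\sigma_2\times\cdots\times\sigma_d$ of $X_H$, under which $\sigma_\ell = \{i_\ell\}\cup\{j\in\alpha : i_{\ell-1}<j<i_\ell\}$. Under this identification the degree-$\ell$ block of $\sigma$ has maximum $i_\ell$ and the remaining elements of $\sigma_\ell$ are exactly $\alpha\cap A_\ell$; removing $\max(\sigma_\ell)=i_\ell$ from the $\ell$-th block produces the face corresponding to $(c(x_{s_\ell}m);\alpha\setminus\{s_\ell\})$ precisely when $T(\alpha)\cap A_\ell\neq\emptyset$, since then $c(x_{s_\ell}m)=x_{s_\ell}m/x_{i_\ell}$ by the observation in the last Remark before the theorem. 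Thus the two summands in the rewriting of \eqref{eq:topcell} in that Remark translate, term by term, into the two summands of the claimed formula: the first (deleting an element of some $\sigma_\ell$ that is not its maximum, i.e.\ an element of $\alpha$) gives $\sum_{j_i\in\alpha}(-1)^i x_{j_i}(m;\alpha\setminus j_i)$, and the second (deleting the maximum of a block) gives $\sum_{j_i\in T(\alpha)}(-1)^{i-1}\frac{x_{j_i}m}{c(x_{j_i}m)}(c(x_{j_i}m);\alpha\setminus j_i)$. The sign bookkeeping — matching $(-1)^{j+|\sigma_1|+\cdots+|\sigma_{\ell-1}|}$ and $(-1)^{\ell-1+|\sigma_1|+\cdots+|\sigma_\ell|}$ against $(-1)^i$ and $(-1)^{i-1}$ with $i$ the position of $j_i$ in the sorted list $j_1<\cdots<j_p$ — is a routine but slightly delicate calculation that I would carry out by induction on the block index $\ell$, using that the number of $\alpha$-elements lying below $i_\ell$ equals $|\sigma_1|+\cdots+|\sigma_\ell| - \ell$.

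Next I would argue that this differential is realized by an iterated mapping cone. Following the setup of Section~\ref{Mapcones}, at the $j$-th step we cone the Koszul complex on the variables $\{x_i : i\in\set(m_j)\}$ onto the previously built resolution of $R/I_{j-1}$; the choice of lift $\psi$ of the module map $R/(I_{j-1}:m_j)\to R/I_{j-1}$ is what is governed by $c$. Concretely, on a basis element $(m_j;\alpha)$ coming from the shifted Koszul complex $\mathcal G[1]$, the mapping cone differential contributes the Koszul part $\sum_i(-1)^i x_{j_i}(m_j;\alpha\setminus j_i)$ together with $\psi$ applied to $(m_j;\alpha)$, and one defines $\psi$ on the Koszul generators so that $\psi(x_{j_i})\mapsto \frac{x_{j_i}m_j}{c(x_{j_i}m_j)}(c(x_{j_i}m_j);\varnothing)$ and extends multiplicatively/skew-symmetrically, with the restriction to $T(\alpha)$ arising because $c(x_t m_j)=c(x_s m_j)$ whenever $s,t$ lie in the same block $A_\ell$ and $t>s$ (Remark~\ref{rem:s,t}), so only the maximal element of each block survives. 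I would check that this $\psi$ is indeed a chain map (i.e.\ $d_F\psi=\psi d_G$), which reduces to the identity $c(x_s c(x_t m))$ evaluated on pairs from distinct blocks; unlike the regular decomposition function of \cite{HT}, $c$ need not be ``regular'', but one only needs the weaker commutativity that holds when $s\in A_{\ell}$, $t\in A_{\ell'}$ with $\ell\neq\ell'$, and this follows directly from the definition of $c$ as ``lower the support to fill the first available gap''. Finally, since $X_H$ supports a minimal free resolution of $I_H$ by Theorem~\ref{thm:resCoInt}, and the mapping cone with this $\psi$ is by construction a (possibly non-minimal a priori) resolution of $R/I$ whose $i$-th free module has the correct rank (the number of $(i-1)$-faces of $X_H$, by Lemma~\ref{lemma:basis}), it is automatically the minimal resolution, and the two differentials agree by the term-by-term matching above.

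The main obstacle I anticipate is not the topological side — the face-to-symbol dictionary and the differential of $X_H$ are already in hand — but rather verifying cleanly that the prescribed $\psi$ is a bona fide map of chain complexes, since $c$ fails the full identity $c(x_s c(x_t m))=c(x_t c(x_s m))$ and so one cannot simply invoke the machinery of \cite{HT}. The resolution is to track exactly which iterated applications of $c$ actually occur in the mapping cone differential: because successive colon ideals $I_{j-1}:m_j$ are generated by the variables indexed by $\set(m_j)=A_1\cup\cdots\cup A_d$, and because within a single block $A_\ell$ the function $c$ collapses everything to the block maximum, the only compositions $c(x_s c(x_t m))$ that appear have $s,t$ in distinct blocks — and for those the needed commutativity does hold. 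Making this precise, i.e.\ showing the ``bad'' compositions never contribute, is where I would spend the most care; once that is established, the chain-map property and hence the mapping-cone realization follow, and comparison with \eqref{eq:topcell} finishes the proof.
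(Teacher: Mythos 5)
Your proposal matches the paper's proof in all essentials: both define the lift $\psi^{(j)}$ on the Koszul basis via $c$ restricted to $T(\alpha)$, verify the chain-map identity $\psi\circ d_K = d\circ\psi$ by explicit index bookkeeping (with same-block compositions collapsing via Remark~\ref{rem:s,t} and distinct-block compositions cancelling by the sign antisymmetry), and match the resulting mapping-cone differential against the rewritten form of the topological differential \eqref{eq:topcell} under the basis bijection of Lemma~\ref{lemma:basis}. You have also correctly located the one genuine subtlety — that $c$ is not a regular decomposition function, so the [HT] machinery cannot be invoked wholesale — which is exactly the point the paper's computation is designed to circumvent.
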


\begin{proof}
We follow the strategy of the proof of Theorem 1.12 in \cite{HT}.  Let $I = I_H$ be a cointerval interval ideal associated to the cointerval $d$-graph $H$, and let $X_H$ be its homomorphism complex.  We show by induction on $j$ that the complex $F^{(j)}$ has the desired boundary map.  By definition $F^{(j+1)}$ is the mapping cone of $\psi^{(j)}: K^{(j)} \rightarrow F^{(j)}$ so that $F^{(j)}$ is a subcomplex of $F^{(j+1)}$ and hence it is enough to check the formula on the basis elements $(m, \alpha)$.

The definition of the mapping cone differential gives $d(m_{j+1};\alpha) = -d_K(m_{j+1};\alpha) + \psi^{(j)}(m_{j+1};\alpha)$, where $d_K$ is the differential of the relevant Koszul complex $K^{(j)}$.
 
Hence it is enough to show that we can define $\psi^{(j)}$ according to
\[\psi^{(j)} (m_{j+1};\alpha) = \sum_{{j_i\in T(\alpha)}}(-1)^{i-1} \frac{x_{j_i} m_{j+1}}{c(x_{j_i}m_{j+1})} (c(x_{j_i} m_{j+1});\alpha\setminus {j_i}),\]
\noindent
if $\alpha \neq \emptyset$, and otherwise $\psi^{(j)}(m_{j+1};\emptyset) = m_{j+1}$.

For this we must verify that $\psi^{(j)} \circ d_K = d \circ \psi^{(j)}$. To simplify notation we let $m = m_{j+1}$ and $\psi = \psi_{j+1}$.

For $t \in \set(m)$ a singleton element we have

\[ \big(\psi \circ d_K \big) \big( (m; \{t\}) \big) \; = \; \psi \big(x_t(m; \emptyset) \big) \; = x_t m,\]

\noindent
and on the other hand

\[ \big(d \circ \psi \big) \big((m; \{t\}) \big) \; = \; d \big( \frac{x_t m}{c(x_t m)} (c(x_t m); \emptyset) \big) \\
\; = \frac{x_t m}{c(x_tm)} c(x_t m) = x_t m. \]

\noindent
For larger subsets of $\set(m)$, next consider $\sigma \subset \set(m)$ with $|\sigma| \geq 2$.  In this case we have

\begin{equation} \label{psid}
\begin{aligned}
\big(\psi \circ d_k \big) \big( (m; \alpha) \big)&=\sum_{t \in \alpha} (-1)^{\epsilon(\alpha; t)} x_t \psi \big( (m; \alpha \backslash t) \big) \\
&=\sum_{t \in \alpha} (-1)^{\epsilon(\alpha;t)} x_t \Large( \sum_{s \in {T(\alpha \backslash t)}} (-1)^{\epsilon(\alpha \backslash t; s)} \frac{x_s m}{c(x_s m)} (c(x_s m); \alpha \backslash \{s,t\})\\
&=\sum_{t \in \alpha} \sum_{s \in {T(\alpha \backslash t)}}  (-1)^{\epsilon(\alpha; t) + \epsilon(\alpha \backslash t;s)} \frac{x_t x_s m}{c(x_s m)} (c(x_s m); \alpha \backslash \{s,t\}).
\end{aligned}
\end{equation}

\noindent
Here we use the notation $\epsilon(\alpha;t) = |\{s \in \alpha \; : \; s < t\}|$.

\medskip
The other composition gives us

\begin{equation} \label{dpsi}
\begin{aligned}
\big(d \circ \psi)\big((m;\alpha) \big)&=\sum_{t \in {T(\alpha)}} (-1)^{\epsilon(\alpha;t)} \frac{x_t m}{c(x_t m)} d\big( (c(x_t m); \alpha \backslash t) \big),
\end{aligned}
\end{equation}

\noindent
where

\[
\begin{aligned}
d \big( (c(x_t m); \alpha \backslash t) \big) &=\sum_{s \in \alpha \backslash t} (-1)^{\epsilon(\alpha \backslash t; s)} x_s (c(x_t m); \alpha \backslash \{s,t\}) \\
&+\sum_{s \in {T(\alpha \backslash t)}} (-1)^{\epsilon(\alpha \backslash t; s)} \frac{x_s c(x_t m)}{c(x_s c(x_t m))} \big(c(x_s c(x_t m)); \alpha \backslash \{s,t\}\big).
\end{aligned}
\]

\noindent
Therefore Equation \ref{dpsi} becomes 

\begin{equation} \label{new_dpsi}
\begin{aligned}
\sum_{t \in {T(\alpha)}} (-1)^{\epsilon(\alpha;t)} \frac{x_t m}{c(x_t m)}\sum_{s\in T(\alpha\backslash\{t\})}  (-1)^{\epsilon(\alpha \backslash t; s)} \frac{x_s c(x_tm)}{c(x_s c(x_t m))} \big( c(x_s c(x_t m)); \alpha \backslash \{t,s\}\big)\\
+
\sum_{t \in {T(\alpha)}} (-1)^{\epsilon(\alpha;t)} \frac{x_t m}{c(x_t m)}\sum_{s\in \alpha\backslash\{t\}}  (-1)^{\epsilon(\alpha \backslash t; s)} x_s \big( c(x_t m); \alpha \backslash \{t,s\} \big).\\
\end{aligned}
\end{equation}

\medskip

\noindent
Now note that the first summand expressing $\big(d \circ \psi)\big((m;\alpha) \big)$ in Equation \eqref{new_dpsi} can be written
\[
\begin{aligned}
\sum_{t \in {T(\alpha)}} \sum_{s\in T(\alpha\backslash\{t\})}  (-1)^{\epsilon(\alpha; t) + \epsilon(\alpha \backslash t;s)} \frac{x_s x_t m}{c(x_s c(x_t m))} \big( c(x_s c(x_t m)); \alpha \backslash \{t,s\}\big),\\
\end{aligned}
\]

\noindent
which  can be expanded as

\begin{equation}\label{expanded}
\begin{aligned}
\sum_{\ell=1}^{d}\sum_{t=s_\ell} \sum_{s=s_{k}\atop k \neq \ell }  (-1)^{\epsilon(\alpha; t) + \epsilon(\alpha \backslash t;s)} \frac{x_s x_t m}{c(x_s c(x_t m))} \big( c(x_s c(x_t m)); \alpha \backslash \{t,s\} \big)
&\\+
\sum_{\ell=1}^{d}\sum_{t=s_\ell} \sum_{s=s'_{\ell}}  (-1)^{\epsilon(\alpha; t) + \epsilon(\alpha \backslash t;s)} \frac{x_s x_t m}{c(x_s c(x_t m))} \big(c(x_s c(x_t m)); \alpha \backslash \{t,s\}\big).\\
\end{aligned}
\end{equation}

\noindent
Exchanging the role of $\ell$ and $k$ in the first summand of \ref{expanded} shows that it is `zero'.

The second summand of \ref{new_dpsi} can be written
\[
\begin{aligned}
\sum_{t \in {T(\alpha)}} \sum_{s\in \alpha\backslash\{t\}} 
(-1)^{\epsilon(\alpha; t) + \epsilon(\alpha \backslash t;s)} \frac{x_t x_sm}{c(x_t m)}  \big(c(x_t m); \alpha \backslash \{t,s\}\big),\\
\end{aligned}
\]

\medskip

\noindent
and after applying the observation from Remark~\ref{rem:s,t} we obtain
\begin{equation}\label{newest_dpsi}
\begin{aligned}
\big(d \circ \psi)\big((m;\alpha) \big)&=\sum_{\ell=1}^{d}\sum_{t=s_\ell} \sum_{s=s'_{\ell}}  (-1)^{\epsilon(\alpha; t) + \epsilon(\alpha \backslash t;s)} \frac{x_s x_t m}{c(x_s m)} \big(c(x_s m); \alpha \backslash \{t,s\} \big) \\
&+
\sum_{t \in {T(\alpha)}} \sum_{s\in \alpha\backslash\{t\}} 
(-1)^{\epsilon(\alpha; t) + \epsilon(\alpha \backslash t;s)} \frac{x_t x_sm}{c(x_t m)}  \big( c(x_t m); \alpha \backslash \{t,s\} \big).\\
\end{aligned}
\end{equation}


Next we compare the indices appearing in the non-zero summands corresponding to Equation \ref{psid} for $\big(\psi\circ d_k)\big((m;\alpha) \big)$ and Equation \ref{newest_dpsi} for $\big(d \circ \psi)\big((m;\alpha) \big)$,  The indices appearing in the non-zero summands of $\big(\psi\circ d_k)\big((m;\alpha) \big)$ consist of 
\[
\begin{aligned}
 &\big \{(t,s_{k}) :\ t \in A_{\ell}, \; \ell \neq k \big \} \; \bigcup \; \big \{(t,s_{\ell}) :\ t \in A_{\ell}  \backslash s_{\ell} \big \} \; \bigcup \; \big \{(s_{\ell},s^\prime_{\ell}) \}. \\
\end{aligned}
\]

\medskip

Recall our notation, here $s_{k}$ denotes the maximum element of $\alpha \cap A_{k}$, and $s^\prime_{\ell}$ denotes the maximum element of $\alpha \cap A_{\ell} \backslash s_{\ell}$.

On the other hand the indices appearing in the non-zero summands of $\big(d \circ \psi)\big((m;\alpha) \big)$ consist of 
\[
\begin{aligned}
 & \big \{(s_{\ell},s^\prime_{\ell}) \big \} \; \bigcup \; \big \{ (s_{k}, j) :\ j \in A_{\ell}, \; \ell \neq k \big \} \; \bigcup \;  \big \{(s_{\ell}, j) :\ j \in A_{\ell}  \backslash s_{\ell} \big \} 
 . \\
\end{aligned}
\]

\noindent
Exchanging the roles of $s$ and $t$ completes the proof.
\end{proof}

\medskip



\begin{Remark}
The cells that serve as basis elements of the free modules in the resolution constructed in Theorem~\ref{Theorem_Homcone} can also be obtained
 by gluing together the simplices  ${\rm ch}(m,\alpha,\sigma)$ corresponding to the different choices of permutations $\sigma$ of $\alpha$ from the set $\mathcal{A}$, where 
\[\mathcal{A}=\{\sigma=(\sigma_1,\ldots,\sigma_p):\ \ {\rm if\ } \sigma_i,\sigma_j\in A_\ell {\rm \ for\ some\ }\ell \ {\rm and\ }\sigma_i>\sigma_j\ {\rm then\ } i<j\}.\]  

For this we define the cell $U(m, \alpha)$ as the union over all permutations $\sigma\in \mathcal{A}$ of $\alpha$.

\[ U(m, \alpha) = \bigcup_{\sigma\in\mathcal{A}} {\rm ch}(m,\alpha,\sigma) .\]
\end{Remark}

\subsection{Other regular decomposition functions}\label{RegRes}
Describing all possible cellular realizations of the mapping cone resolution of an ideal $I$ with linear quotients seems to be difficult (even for a fixed ordering of the generators).  Perhaps a more manageable task would be to restrict to those cellular resolutions one obtains from regular decomposition functions.

\medskip

For an ideal $I$ with linear quotients and a regular decomposition function, Herzog and Takayama in \cite[Theorem 1.12]{HT} provide an explicit description of the differential map of the resolution of an ideal $I$ obtained from an iterated mapping cone.   One can check that the proof of the theorem (and the related lemmas) relies only on the regularity of the decomposition function $b$, and not the definition of $b$ in terms of assigning a particular generator of $I$ to every monomial.  Similarly, our cellular realization in Theorem \ref{Theorem_cone} only relies on the regularity property.  Hence we can vary the decomposition function in each step and apply those results to obtain combinatorially distinct geometric complexes that support resolutions, as the next example illustrates.

\begin{Example}
Consider the ideal given by $I = \langle x_1x_2, x_1x_3,x_1x_5, x_2x_3, x_2x_5, x_3x_5, x_4x_5 \rangle$.  One can check that $I$ has linear quotients with respect to this ordering.  We construct a resolution of $I$ via the iterated mapping cone procedure, and by choosing different regular decomposition functions at each step we arrive at a family of combinatorially distinct complexes supporting the resolution.

\begin{figure}
\begin{center}
  \includegraphics[scale = .8]{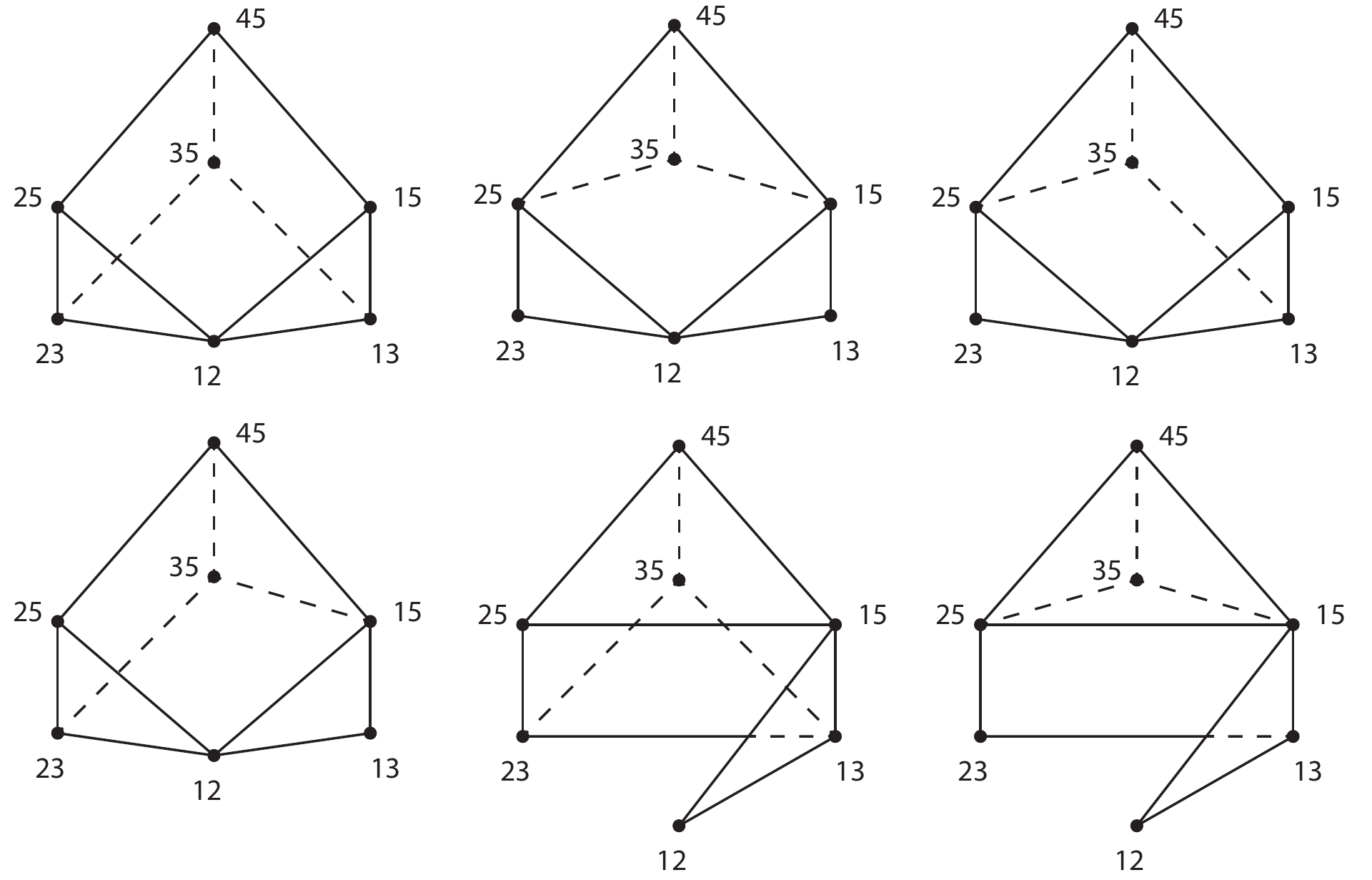}
\label{resolutions_all}
    \caption{The family of cellular resolutions obtained by considering all possible regular decompositions of the ideal $I$ with linear quotient order $(x_1x_2, x_1x_3,x_1x_5, x_2x_3, x_2x_5, x_3x_5, x_4x_5)$.  Here we use the notation $ij$ to denote the variable $x_ix_j$.  The first complex is the generalized EK resolution from Section \ref{EKres} and the last is the homomorphism complex of Section 
   \ref{MoreRes}.}

\end{center}
\end{figure}

\end{Example}

\subsection{Further questions - a space of resolutions?}\label{ResSpace}

In each step of the mapping cone construction we have a choice of homomorphism of complexes $\psi: {\mathcal G} \rightarrow {\mathcal F}$ that lifts the map of $R$-modules $R/(I_{j-1}:f_j) \rightarrow R/I_{j-1}$.  As we have seen, this choice of homomorphism can be encoded in the cellular structure of the Koszul simplex that is glued onto the previously constructed resolution.  After fixing a basis the collection of all such choices of bluings forms a finite set, but can we understand them as comprising some geometric object and hence obtain a `space of mapping cone resolutions'?

\medskip

We note that if $I = \langle x_1, x_2, \dots, x_n \rangle^d$ is a power of the graded maximal ideal, a certain space of cellular resolutions of $I$ is in fact described in \cite{DJS}.  In this context a cellular resolution of $I$ is obtained by a generic arrangement of tropical hyperplanes, which in turn corresponds to a regular triangulation of the product of simplices $\Delta_{n-1} \times \Delta_{d-1}$.  The collection of all regular triangulations of $\Delta_{n-1} \times \Delta_{d-1}$ (or any polytope) has a natural polyhedral structure known as a \emph{secondary polytope}.

\medskip
A natural question arrises: if we fix the linear quotient order on the generators of an ideal $I$, what are the possible combinatorial types of complexes that we see as we attach a simplex in each step of the cellular mapping cone construction?  How many different choices do we have to glue in the simplex?  At one extreme sits the maximal ideal $I = \langle x_1, x_2, \dots x_n \rangle$, where we have no choice but to build another simplex of one more dimension in each step.  The space of resolutions in this case is a single point.  As seen in \cite{DJS}, already for the square of the maximal ideal in 4 variables we see distinct complexes arising.  


\bibliography{cone.bib}
\bibliographystyle{plainnat}

\end{document}